\documentclass[a4paper,12pt,final]{amsart}
\usepackage{amsaddr}
\usepackage[a4paper,left=24mm,right=24mm,top=34mm,bottom=34mm]{geometry}
\usepackage[utf8]{inputenc}
\usepackage{color}
\usepackage{xr-hyper} 
\usepackage{hyperref}
\usepackage{cleveref}
\usepackage{showlabels}
\usepackage{comment}
\usepackage[shortlabels]{enumitem}
\usepackage{todonotes}
\hypersetup{
	colorlinks,
	citecolor=red,
	filecolor=blue,
	linkcolor=blue,
	urlcolor=purple
}

\pagestyle{myheadings}

\makeatletter
\makeatother

\usepackage[all]{xy}
\usepackage{amsmath,amssymb,amsfonts,amsthm,graphicx}

\usepackage{multirow}
\usepackage{comment}
\theoremstyle{plain}


\tikzset{node distance=2cm, auto}

\newtheorem{thm}{Theorem}[section]
\newtheorem{conj}[thm]{Conjecture}
\newtheorem{cor}[thm]{Corollary}

\newtheorem{lm}[thm]{Lemma}

\theoremstyle{definition}
\newtheorem{df}[thm]{Definition}
\newtheorem{notation}[thm]{Notation}
\newtheorem{hyp}[thm]{Hypothesis}
\theoremstyle{remark}
\newtheorem{rem}[thm]{Remark}

\newcommand{\ZZ}{\operatorname{Z}}

\newcommand{\Alp}{\operatorname{Alp}}
\newcommand{\dz}{\operatorname{dz}}

\newcommand{\rdz}{\operatorname{rdz}}

\newcommand{\SL}{\operatorname{SL}}

\newcommand{\vphi}{\varphi}

\newcommand{\FF}{\ensuremath{\mathbb{F}}}

\newcommand{\NNN}{\operatorname{N}}

\newcommand{\GF}{ {\bG^F}}
\newcommand{\wGF}{ {\wbG^F}}
\newcommand{\wG}{ {\widetilde G }}
\newcommand{\Cent}{\operatorname{C}}
\newcommand{\Zent}{\operatorname{Z}}

\newcommand{\wt}{\widetilde}
\newcommand{\wbG}{{\widetilde \bG}}

\newcommand{\wN}{{\wt N}}

\usepackage{cleveref,enumitem}
\newlist{asslist}{enumerate}{1} 
\setlist[asslist]{label=(\roman*), ref=\thethm(\roman*)}

\crefalias{asslistenumi}{Assumption} 

\newlist{thmlist}{enumerate}{1} 
\setlist[thmlist]{label=(\alph*), ref=\thethm(\alph*)}

\newcommand{\calA}{{\mathcal A}}

\newcommand{\calB}{{\mathcal B}}

\newcommand{\calG}{\mathcal G}

\newcommand{\calP}{\mathcal P}

\newcommand{\bfG}{\bG}

\newcommand{\Bl}{\operatorname{Bl}}
\newcommand{\Out}{\operatorname{Out}}

\newcommand{\bl}{\operatorname{bl}}

\newcommand{\Irr}{\operatorname{Irr}}
\newcommand{\IBr}{\operatorname{IBr}}

\newcommand{\Aut}{\operatorname{Aut}}

\newcommand{\Lin}{\operatorname{Lin}}
\newcommand{\LinBr}{\operatorname{LinBr}}

\newcommand{\olQtheta}{{\ol{(Q,\theta)}}}
\newcommand{\olwtQwttheta}{{\ol{(\wt Q,\wt \theta)}}}

\newcommand{\ol}{\overline}

\newcommand{\wh}{\widehat}

\newcommand{\bG}{{{\mathbf G}}}

\numberwithin{equation}{section}

\excludecomment{commentMe}

\title{A criterion for the inductive Alperin weight condition}

\begin{document}
\begin{abstract}
We give a criterion that simplifies the checking of the inductive Alperin weight condition for the remaining open cases of simple groups of Lie type (see Theorem~\ref{AWStCond} below). It is strongly related in form to the criterion of the second author for the inductive McKay conditions (see \cite[2.12]{SpIMDefChar}) that has proved very useful. The proof follows from a Clifford theory for weights intrinsically present in the proof of reduction theorems of the Alperin weight conjecture given by Navarro--Tiep and the second author. We also give a related criterion for the inductive blockwise Alperin weight condition (Theorem~\ref{NewIndBAWCond}). 
\end{abstract}

\author{Julian Brough and Britta Sp\"ath}
\thanks{The first author thanks the Humbold Foundation for its support.}
\thanks{Both authors like to thank the research training group \emph{GRK 2240: Algebro-Geometric Methods in Algebra, Arithmetic and Topology}, funded by the DFG}
\email{brough@uni-wuppertal.de,\, bspaeth@uni-wuppertal.de}
\maketitle
\section{Introduction}

The Alperin weight conjecture and its blockwise version together form one of the deep remaining problems for the representation theory of finite groups.
These conjectures aim to count the irreducible modular representations of a group through certain local objects.
If $G$ is a finite group and $p$ a prime, then a $p$-{\it weight} of $G$ is a pair $(Q,\theta)$, where $Q$ is a $p$-subgroup of $G$ and $\theta\in \Irr(\NNN_G(Q)/Q)$ such that $\theta(1)_p=|\NNN_G(Q)/Q|_p$.

\begin{conj}\cite{AlpWei}
Let $G$ be a finite group and $p$ a prime. 
Then the number of irreducible $p$-Brauer characters of $G$ equals the number of $G$-conjugacy classes of $p$-weights of $G$.
\end{conj}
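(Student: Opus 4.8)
The plan is to prove the conjecture along the reduction-theoretic route underlying the results quoted in the abstract. By the reduction theorem of Navarro--Tiep and the second author, the Alperin weight conjecture holds for every finite group at the prime $p$ as soon as every finite non-abelian simple group $S$ satisfies the \emph{inductive Alperin weight condition} for $p$. That condition is a statement about $S$ alone together with the action of $\Aut(S)$ and a universal covering group of $S$: one must produce an $\Aut(S)_{(Q,\theta)}$-equivariant bijection between $\IBr(S)$ and the $S$-conjugacy classes of $p$-weights of $S$ which is, moreover, compatible with Clifford theory over the covering group (existence of suitable character extensions to the relevant stabilizers). So the first step is to invoke that theorem and replace the global statement by a finite list of local, group-by-group assertions indexed by the simple groups.

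The second step is to dispose of the simple groups for which the inductive condition is already known: the alternating groups, the sporadic groups and the Tits group, together with many families of groups of Lie type (in particular a range of cross-characteristic situations, handled through generic block theory, and several defining-characteristic families). What remains, and is genuinely hard, is the inductive Alperin weight condition for the simple groups of Lie type $\GF$ in their own defining characteristic $p$.

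The third step is where the present paper enters: instead of verifying the full inductive condition for such $\GF$, one checks the hypotheses of Theorem~\ref{AWStCond}, which repackage it into more tractable data organized around the Steinberg character — a classification of the radical $p$-subgroups $Q$ of $\GF$ up to conjugacy and the structure of $\NNN_{\GF}(Q)/Q$, a $\wGF$-equivariant parametrization of weights obtained from the Clifford theory for weights isolated in the proof of the reduction theorem, and the requisite extendibility statements (e.g.\ via a weak $\wGF$-equivariant extension map). Concretely one would (a) describe the radical subgroups and their normalizers; (b) parametrize the weights of the relevant local subgroups and transport them to $\GF$; (c) make the resulting bijection equivariant for diagonal, field and graph automorphisms; and (d) check the cohomological/extension conditions. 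The hard part will be steps (c)--(d): controlling the action of the full automorphism group — especially graph and graph-field automorphisms, notably in types $D_n$, $E_6$ and the very twisted groups — on the set of weights of $\GF$, and arranging that a single bijection can be chosen simultaneously automorphism-equivariant and Clifford-compatible. Theorem~\ref{AWStCond} is designed precisely to isolate that difficulty, in the same way that \cite[2.12]{SpIMDefChar} does for the inductive McKay condition; even so, verifying its hypotheses for each remaining family of $\GF$ is expected to require a substantial case-by-case analysis of how $\NNN_{\GF}(Q)$ and its character theory interact with the algebraic-group automorphisms.
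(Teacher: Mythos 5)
There is a genuine gap, and it is structural: the statement you are asked to prove is stated in the paper as a \emph{conjecture} (Alperin's weight conjecture), and the paper contains no proof of it --- nor does your proposal. What you describe is the known reduction strategy: by the Navarro--Tiep reduction theorem the conjecture for all finite groups would follow once every finite non-abelian simple group satisfies the inductive AW condition at $p$. That first step is a correct citation, but everything after it is a research programme rather than an argument. The verifications you list as ``step two'' and ``step three'' are precisely the parts that are still open; the paper's own purpose is to supply a criterion (Theorem~\ref{AWStCond}) that \emph{simplifies} checking the inductive condition in the remaining open cases, and even with that criterion the hypotheses must be verified family by family --- work that your proposal explicitly defers (``is expected to require a substantial case-by-case analysis''). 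A proof cannot end with an expectation; as it stands, no bijection $\IBr(G)\to\Alp(G)$ with the required equivariance and Clifford-compatibility is produced for the outstanding families, so the conjecture is not established.

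Two factual points in your sketch are also off. First, you claim the remaining hard case is groups of Lie type in their \emph{defining} characteristic; in fact the reduction papers \cite{NaTi_red} and \cite{BAWSp} already established the inductive condition in defining characteristic, and the open cases the present paper targets are the non-defining (cross-characteristic) ones. Second, Theorem~\ref{AWStCond} is not ``organized around the Steinberg character'' or a classification of radical $p$-subgroups; it is a criterion phrased via a regular-embedding setup $G\lhd\wt G\rtimes E$, an equivariant bijection $\wt\Omega:\IBr(\wt G)\to\Alp(\wt G)$ at the level of $\wt G$, extendibility hypotheses, and the Clifford theory for weights (the DGN correspondence and Lemmas~\ref{lem216}--\ref{WeightCorrYtoX}); the Steinberg-character picture belongs to the inductive McKay criterion of \cite{SpIMDefChar} in defining characteristic, which is only the formal model being imitated. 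So even as an outline of the intended route, the proposal misidentifies both the open territory and the shape of the tool the paper provides.
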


The blockwise Alperin weight conjecture forms a refinement of the previous conjecture by taking into account $p$-blocks of a finite group, which provide a partition of both the irreducible $p$-Brauer characters and $p$-weights.
  
\begin{conj}\cite{AlpWei}
Let $G$ be a finite group, $p$ a prime and $B$ a $p$-block of $G$. 
Then the number of irreducible $p$-Brauer characters of $B$ equals the number of $G$-conjugacy classes of $p$-weights of $B$.
\end{conj}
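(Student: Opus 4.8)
The plan is necessarily a reduction strategy, since this is one of the deep open conjectures of the area, and the criterion of the present paper is built precisely to make that strategy work. The starting point is the reduction theorem for the blockwise Alperin weight conjecture due to Navarro--Tiep and the second author: the conjecture holds for every finite group $G$, every prime $p$ and every $p$-block $B$ provided every finite non-abelian simple group satisfies the inductive blockwise Alperin weight (BAW) condition. So the first step is to invoke that theorem and thereby replace the statement for an arbitrary $G$ by a statement about universal covering groups of finite simple groups. The contribution of this paper enters here: Theorem~\ref{NewIndBAWCond} recasts the inductive BAW condition in a form parallel to the second author's criterion for the inductive McKay conditions, so that the remaining verification becomes feasible (and Theorem~\ref{AWStCond} plays the same role for the non-blockwise conjecture).

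The second step is to run through the classification of finite simple groups. For the sporadic groups and the Tits group, for the alternating groups, and for groups of Lie type whose relevant Sylow $p$-subgroup is small enough (in particular many cross-characteristic situations, and the prime $p=2$), the inductive BAW condition is already established in the literature and can simply be cited. What remains are the open cases emphasised in the abstract: simple groups of Lie type, chiefly in their defining characteristic, and the non-defining-characteristic cases with non-abelian Sylow $p$-subgroups. For such a group with $G=\bG^F$, the plan is to build, block by block, an $\Aut(G)$-equivariant bijection between $\IBr(B)$ and the $G$-classes of $p$-weights in $B$, exploiting the parametrisations of irreducible Brauer characters and of weights in terms of (ordinary or $e$-)Harish-Chandra data and Jordan decomposition, and then to check the additional compatibilities that Theorem~\ref{NewIndBAWCond} isolates — equivariance under field and graph automorphisms, the behaviour under the diagonal automorphisms induced by $\wbG^F/\bG^F$, and the relevant central-character and cohomological conditions on the associated projective representations.

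The hard part will be exactly this last verification in defining characteristic. There the weights are governed by the parabolic subgroups of $\bG^F$ and the combinatorics of the principal block; the full automorphism group mixes inner-diagonal, field and graph automorphisms in a way that interacts non-trivially with the Clifford theory over $\wbG^F$; and one must rule out the obstruction classes in $H^2$ that the criterion is designed to localise. I would expect the decisive work to be the construction of a bijection that is simultaneously compatible with block distribution and stable under the whole of $\Aut(G)$, together with the stabiliser computations needed to show that the projective representations entering the criterion can be chosen coherently. Once the criterion of Theorem~\ref{NewIndBAWCond} is available, everything beyond that point should be formal, which is the sense in which the criterion ``simplifies'' the remaining checking.
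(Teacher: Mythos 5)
This statement is not a theorem of the paper at all: it is the blockwise Alperin weight conjecture itself, quoted from Alperin's 1987 article, and the paper neither proves it nor claims to. Your ``proof'' is therefore not a proof but an outline of the standard reduction programme, and the gap is exactly where you wave your hands: after invoking the reduction theorem of \cite{BAWSp} (and \cite{NaTi_red} for the non-blockwise version), one still has to verify the inductive BAW condition for \emph{every} finite non-abelian simple group, and this verification is open for infinitely many groups of Lie type. The sentence beginning ``the plan is to build, block by block, an $\Aut(G)$-equivariant bijection\dots'' is precisely the unsolved problem, not a step one can carry out in a paragraph; the present paper's Theorems~\ref{AWStCond} and~\ref{NewIndBAWCond} are tools intended to make such verifications feasible in ongoing work (cf.\ the citations \cite{FeLiZh1,Li_Sp2nq2,FeLiZh2,FeMa}), not a completion of them. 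So no argument along your lines currently yields the conjecture, and the paper's role is only to supply a criterion, not a proof.

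Two factual corrections to your sketch. First, the defining-characteristic case for simple groups of Lie type is \emph{not} among the open cases: it was settled in \cite{NaTi_red} and \cite{BAWSp} themselves, as the introduction of this paper states; the hard remaining cases are the non-defining-characteristic ones (together with the compatibilities over $\wt G=\wbG^F$ and the action of field and graph automorphisms that the criterion isolates). Second, the literature you propose to ``simply cite'' for alternating, sporadic groups and small Sylow situations covers many but by no means all of the non-Lie-type or small cases uniformly, so even that step requires care. None of this changes the main verdict: the statement is a conjecture, the paper offers no proof, and your proposal does not close the gap.
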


Since the statement of these conjectures, there have been multiple results towards proving them for large classes of finite groups. 

In \cite{NaTi_red}, Navarro and Tiep presented an approach which reduced the non-blockwise version to a certain set of conditions (the inductive AW condition) for every quasi-simple group.
The second author in \cite{BAWSp} refined the methods of Navarro and Tiep to provide a similar reduction for the blockwise version to the so-called inductive BAW condition.
Furthermore, these papers established the respective inductive condition for the case of simple groups of Lie type for $p$ the defining characteristic.


The inductive AW condition and BAW condition bear a strong resemblence to the inductive condition for the McKay conjecture and its blockwise counterpart the Alperin--McKay conjecture \cite{IMNRedMcKay,AMSp}.
Moreover, the second author has verified the inductive McKay condition for most of the families of groups of Lie type, using as a key ingredient Proposition 2.12 of \cite{SpIMDefChar}. We propose here an analogue of this statement for the inductive Alperin weight condition, see Theorem~\ref{AWStCond}.
Mimicking ideas of the reduction theorems given in \cite{NaTi_red} and \cite{BAWSp} we develop a kind of Clifford theory for weights between a group and a normal subgroup (see also \cite{CLMS} for an early use of projective representations in that context). This uses in an essential way properties proven in \cite{NavSpBrHtZero} about the Dade--Glauberman--Nagao correspondence.

During the preparation of this note various authors used our criterion already verifying the inductive Alperin weight condition and the inductive blockwise Alperin weight condition for families of simple groups of Lie type, see \cite{FeLiZh1,Li_Sp2nq2,FeLiZh2,FeMa}.

{\bf Acknowledgement:} We thank Marc Cabanes and Lucas Ruhstorfer for various insightful conversations. We are also indebted to Conghui Li and Gunter Malle for a thorough reading of an earlier version of this paper. 

\section{Preliminaries}

We start by recalling some results from \cite{NaTi_red} and \cite{BAWSp} on weights in general. We also recall the so-called Dade-Glauberman-Nagao correspondence (written as the DGN-correspondence). In the next part of this section we give some Clifford theory-like results for weights and conclude this section by highlighting some consequences of those results.

\subsection{Recall}

We use in general the notation of \cite{NavBl} for $p$-blocks, characters and $p$-Brauer characters. We assume a prime $p$ is fixed throughout the paper.

\begin{notation}
Let $G$ be a finite group. Then
$\Irr(G)$ denotes the set of irreducible characters of $G$, while $\Lin(G):=\{\chi\in\Irr(G)\mid \chi(1)=1\}$ is seen here as a multiplicative group.
Let $F$ be the field of characteristic $p$ defined as in \cite[p.~87]{NavBl}.
The set of irreducible $p$-Brauer characters of $G$ coming from representations over $F$ is denoted by $\IBr(G)$ (defined on $G^0:=G_{p'}$, the set of $p$-regular elements of $G$), while $\LinBr(G):=\{\varphi\in\IBr(G)\mid \varphi(1)=1\}$. Let $\Bl_p(G)$ denote the set of $p$-blocks of $G$. The $p$-block containing a given $\theta\in\IBr(G)$ is denoted by $\bl(\theta)\in\Bl_p(G)$.
Lastly ${\rm dz} (G)$ denotes the set of $p$-defect zero characters of $G$, that is
\[
{\rm dz} (G):=\{\chi\in \Irr(G)\mid \chi(1)_p=|G|_p\}.
\]
\end{notation}

The map $\chi\mapsto\chi^0$ induces a bijection between $\Lin(G)_{p'}$ and $\LinBr (G)$, 
where $\chi^0$ denotes the restriction of $\chi$ to $G^0$. 

\begin{notation}
Let $N\lhd G$ and $\chi\in \Irr(G)$.
Then $$\Irr(N\mid \chi):=\{\eta \in\Irr(N)\mid \langle\chi_N,\eta\rangle_N\neq 0\}.$$ Similarly for $\eta\in\Irr(N)$, one defines $\Irr(G\mid \eta):=\{\chi\in\Irr(G)\mid \langle\chi_N,\eta\rangle_N\neq 0\}$.
Define $\IBr(N\mid\varphi)$ and $\IBr(G\mid\theta)$ for $\varphi\in\IBr(G)$, $\theta\in\IBr(N)$, in an analogous way.
\end{notation}

\begin{df}
Let $G$ be a finite group and $p$ a prime. One denotes by
$\Alp ^0(G)$ the set of all $p$-weights of $G$, that is pairs $(Q,\theta)$ with $\theta\in{\rm dz} (\NNN_G(Q)/Q)$ for $Q$ a $p$-subgroup of $G$.
The $G$-orbit of $(Q,\theta)$ is denoted by $\ol{(Q,\theta)}$ and ${\Alp} (G):=\Alp^0 (G)/\sim_G$.

For $\nu\in\Lin(\ZZ(G))_{p'}$ we denote by $\Alp^0 (G\mid \nu)$ the pairs $(Q,\theta)\in \Alp^0 (G)$, such that $\theta$ lifts to a character in $\Irr(\NNN_G(Q) \mid \nu )$.
Note that this passes to the $G$-orbit and so $\Alp (G\mid \nu)$ is defined analagously.
\end{df}
%

\begin{lm}
Let $G$ be a finite group.
Then $\Lin(G)_{p'}$ (and therefore $\LinBr(G)$) acts on $\Alp(G)$ by $\mu.(Q,\theta)=(Q,\theta.\ol\mu)$ where $\ol\mu\in \Lin({\NNN_G(Q)/Q})$ is the restriction of $\mu\in\Lin(G)_{p'}$ to ${\NNN_G(Q)}$ seen as a linear character with $Q$ in its kernel.
\end{lm}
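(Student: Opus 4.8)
The plan is to verify, in order, that $\mu.(Q,\theta)$ is again a $p$-weight, that the construction depends only on the $G$-orbit of $(Q,\theta)$, and that the resulting assignment obeys the axioms of a (left) group action; the statement for $\LinBr(G)$ is then a formal consequence.

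First I would treat well-definedness at the level of pairs. For $\mu\in\Lin(G)_{p'}$ and a $p$-subgroup $Q\le G$, the restriction $\mu|_{\NNN_G(Q)}$ is a linear character of $p'$-order, so its further restriction $\mu|_Q$ is a homomorphism from the $p$-group $Q$ into a cyclic group of $p'$-order, hence trivial. Thus $Q\le\ker(\mu|_{\NNN_G(Q)})$ and $\ol\mu\in\Lin(\NNN_G(Q)/Q)$ is genuinely defined; this is the one place where the hypothesis $\mu\in\Lin(G)_{p'}$, rather than merely $\mu\in\Lin(G)$, is used. Since $\ol\mu$ is linear, $(\theta.\ol\mu)(1)=\theta(1)$ for any $\theta\in\dz(\NNN_G(Q)/Q)$, so $(\theta.\ol\mu)(1)_p=|\NNN_G(Q)/Q|_p$ and $(Q,\theta.\ol\mu)\in\Alp^0(G)$.

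Next I would check $G$-equivariance, which lets the action descend to $\Alp(G)=\Alp^0(G)/\!\sim_G$. For $g\in G$ we have $\NNN_G(Q)^g=\NNN_G(Q^g)$, and because $\mu\in\Irr(G)$ is a class function on $G$, the linear character of $\NNN_G(Q^g)/Q^g$ built from $\mu$ coincides with $(\ol\mu)^g$. Hence $\bigl(Q,\theta.\ol\mu\bigr)^g=\bigl(Q^g,\theta^g.(\ol\mu)^g\bigr)$ agrees with $\mu.(Q^g,\theta^g)$, so $\mu$ sends $\sim_G$-equivalent weights to $\sim_G$-equivalent weights and $\mu.\ol{(Q,\theta)}$ is well defined. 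Finally, for the axioms: the trivial character restricts to the trivial character of every $\NNN_G(Q)/Q$ and thus acts as the identity, while $\mu\mapsto\ol\mu$ is a group homomorphism $\Lin(G)_{p'}\to\Lin(\NNN_G(Q)/Q)$ (composite of restriction with inflation), giving $\mu_1.(\mu_2.(Q,\theta))=(Q,\theta.\ol{\mu_1}\,\ol{\mu_2})=(Q,\theta.\ol{\mu_1\mu_2})=(\mu_1\mu_2).(Q,\theta)$. Transporting this action along the bijection $\Lin(G)_{p'}\xrightarrow{\ \sim\ }\LinBr(G)$, $\chi\mapsto\chi^0$, recalled above then yields the parenthetical claim.

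I do not anticipate a real obstacle: the argument is a sequence of routine checks, and the only subtlety worth flagging is the interplay already isolated above — the $p'$-condition on $\mu$ is exactly what makes $\ol\mu$ a character of $\NNN_G(Q)/Q$, and the linearity of $\ol\mu$ is exactly what preserves the defect-zero condition.
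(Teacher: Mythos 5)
Your argument is correct and follows the same route as the paper's proof: restrict $\mu$ to $\NNN_G(Q)$, note that the $p'$-order of $\mu$ forces $Q\le\ker(\mu)$ so the restriction descends to $\NNN_G(Q)/Q$, observe that multiplying by a linear character preserves the defect-zero condition, and conclude by compatibility with $G$-conjugation. You simply spell out the action axioms and the transport to $\LinBr(G)$ more explicitly than the paper does, which is fine.
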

\begin{proof}
If $Q$ is any $p$-subgroup, then $Q\leq \ker(\mu)$ as $\mu$ has $p'$-order.
Thus for $(Q,\theta)\in \Alp^0(G)$ the restriction of $\mu$ to $\NNN_G(Q)$ passes to the quotient $\NNN_G(Q)/Q$, say $\mu_{\NNN_G(Q)/Q}$.
Hence $\mu\cdot (Q,\theta):= (Q,\theta \mu_{\NNN_G(Q)/Q})$ is also a weight in $\Alp^0(G)$. 
The lemma now follows as this action commutes with $G$-conjugation. 
\end{proof}

\subsection{Clifford theory for weights}

Let us now recall a crucial ingredient, the Dade--Glauberman--Nagao correspondence (see \cite{NavSpJAlg}). 

The context is as follows.
	Let $K\lhd G$, $M/K$ a $p$-subgroup of $G/K$ and $b$ a $G$-invariant block of $K$ with defect group $D_0\leq \ZZ(M)$. 
	Let $D$ be a defect group of the unique block $B$ of $M$ covering $b$, so that $M=KD$ and $K\cap D=D_0$. Let $B'\in\Bl_p(\NNN_M(D))$ be the Brauer correspondent of $B$ and $b'$ the unique block of $\NNN_K(D)$ covered by $B'$. 
	Then there is an $\NNN_{G}(D)$-equivariant bijection $$\pi_D: \Irr(b)\longrightarrow \Irr(b')$$ according to \cite[Thm 5.2]{NavSpBrHtZero}. 
	More precisely \cite[Thm 5.2]{NavSpBrHtZero} defines $\pi_D$ only on the set $\Irr_D(b)$ of $D$-invariant characters in $\Irr(b)$, but $\Irr(b)$ and $\Irr(b')$ are in natural correspondence with $\Irr(D_0)$ (for example according to \cite[Thm.~9.12]{NavBl}) and hence 
	$\Irr(b)=\Irr_D(b)$ and $\Irr(b')=\Irr_D(b')$.
	This bijection $\pi_D$ is called the {\em Dade-Glauberman-Nagao (DGN) correspondence}.

	

When $G\lhd \wt G$ is a normal inclusion of finite groups then the weights of $G$ and $\wt G$ can be related via the DGN-correspondence. 
In this section we establish this relation and consider the Clifford theory for weights (see \Cref{CliffWei}).

Now assume $G\lhd \wt G$ and $(Q,\theta)\in\Alp^0(G)$. 
Set $N:=\NNN_G(Q)$, $\wt N:=\NNN_{\wt G}(Q)_\theta$.
Take $M/N$ a $p$-subgroup of $\wt N/N$.
For the unique block of $M/Q$ which covers the block $\bl(\theta)$ of $N/Q$ fix $D$ a defect group.
Then as above $D\cap (N/Q)=1$, $M/Q=(N/Q)D$ and $\NNN_{M/Q}(D)=\Cent_{N/Q}(D)D$, a direct product.
Note that $\pi_D(\theta)\in\dz(\Cent_{N/Q}(D))$.

\begin{df}
 Let $\ol \pi_D(\theta)$ be the character in $\Irr(\NNN_{M/Q}(D)/D)$ which lifts to $\pi_D(\theta)\times 1_D\in \Irr(\NNN_{M/Q}(D))$.
\end{df}

For $D=\wt Q/Q$ it follows that $M=N\wt Q$ and $\NNN_N(\wt Q)=\NNN_G(\wt Q)$.
Thus $\NNN_{M/Q}(D)/D\cong \NNN_G(\wt Q)\wt Q/\wt Q$ is normal in $\NNN_{\wt G}(\wt Q)/\wt Q$.

\begin{df}\label{DfCovWt}
With the above notation, $(\wt Q, \wt \theta)\in\Alp^0(\wt G)$ is said to {\it cover}  $(Q,\theta)\in \Alp^0(G)$ if $\wt \theta\in \dz(\NNN_{\wt G}(\wt Q)/\wt Q \mid \ol\pi_D(\theta))$.
We write $\Alp^0(\wt G\mid (Q,\theta))$ for the set of those weights.
\end{df}

Assume that $(\wt Q,\wt\theta)$ covers some $(Q,\theta)\in\Alp^0(G)$.
By Clifford theory and the equivariance of the map $\overline \pi_D$ we observe: $(\wt Q,\wt \theta)$ covers $(Q,\theta')$ if and only if $(Q,\theta')$ lies in the $\NNN_{\wt G}(\wt Q)$-orbit of $(Q,\theta)$. 
Moreover, if $g\in\wt G$, then $(\wt Q^g,\wt\theta^g)$ covers $(Q^g,\theta^g)$.
As an analogue, we say that $\olwtQwttheta\in \Alp(\wt G)$ covers $\ol{(Q,\theta)}\in\Alp(G)$ if $(\wt Q,\wt\theta)\in \Alp^0(\wt G\mid (Q,\theta)^g)$ for some $g\in \wt G$.
  
\begin{lm}\label{covWeight}
Let $G\lhd \wt G$. The elements in $\Alp(G)$ covered by $\olwtQwttheta\in \Alp(\wt G)$ form a non-empty $\wt G$-orbit.
\end{lm}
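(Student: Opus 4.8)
The plan is to establish non-emptiness and the orbit property separately, relying on the remarks assembled just before the statement. First, for non-emptiness: fix a representative $(\wt Q,\wt\theta)\in\Alp^0(\wt G)$ of the given orbit. I would like to produce some $(Q,\theta)\in\Alp^0(G)$ that it covers. Take $Q:=\wt Q\cap G$ — wait, this is not automatically a suitable choice since the definition of ``covers'' requires $\wt Q$ to play the role of the larger $p$-group relative to a chosen $Q$ with $M=N\wt Q$. Instead I would argue as follows: set $N:=\NNN_G(\wt Q)$, which is normal in $\wt N':=\NNN_{\wt G}(\wt Q)$. Restrict $\wt\theta$, viewed in $\Irr(\wt N'/\wt Q)$, to the normal subgroup $N\wt Q/\wt Q\cong N/(N\cap\wt Q)$; by Clifford theory it lies over some $\eta\in\Irr(N\wt Q/\wt Q)$, and since $\wt\theta\in\dz(\wt N'/\wt Q)$ one checks $\eta\in\dz(N\wt Q/\wt Q)$ (the defect zero property passes to normal subgroups when the index quotient has a $p'$-part issue — more precisely one uses that $\eta(1)_p\cdot|\wt N'/\wt Q:N\wt Q/\wt Q|_p$ divides $|\wt N'/\wt Q|_p$ and compares). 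Then I would set $D:=\wt Q/Q$ where $Q$ is chosen appropriately, and invoke that $\ol\pi_D$ is an equivariant bijection onto the relevant defect-zero characters, so $\eta=\ol\pi_D(\theta)$ for a unique-up-to-conjugacy $\theta\in\dz(N/Q)$; this $(Q,\theta)$ is then covered by $(\wt Q,\wt\theta)$ by \Cref{DfCovWt}.

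Second, for the orbit property: suppose $(Q,\theta)$ and $(Q',\theta')$ are both covered by $(\wt Q,\wt\theta)$. By the remark immediately preceding the lemma, $(\wt Q,\wt\theta)$ covers $(Q,\theta'')$ if and only if $(Q,\theta'')$ lies in the $\NNN_{\wt G}(\wt Q)$-orbit of $(Q,\theta)$; so all weights with first component $\wt Q\cap G$-conjugate to $Q$ that are covered form a single $\wt G$-orbit. The remaining point is that any two $p$-subgroups $Q,Q'$ arising this way are $\wt G$-conjugate — but this follows because, in the setup of \Cref{DfCovWt}, $Q$ is recovered from $(\wt Q,\wt\theta)$ as (a conjugate of) the kernel-type data determined by the block $\bl(\wt\theta)$ and the defect group $D=\wt Q/Q$, and the DGN correspondent is canonical; more concretely, if $(\wt Q^g,\wt\theta^g)$ covers $(Q'^g,\theta'^g)$ for the translate, one uses that $(\wt Q,\wt\theta)$ itself is fixed up to $\wt G$-conjugacy, so $Q$ and $Q'$ are forced into one $\wt G$-class. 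Combining, the set of covered orbits is $\{\ol{(Q,\theta)}^g : g\in\wt G\}$, a single $\wt G$-orbit. Finally I would record that the choice of orbit representative $(\wt Q,\wt\theta)$ does not matter: replacing it by $(\wt Q^h,\wt\theta^h)$ translates the whole picture by $h$, by the second displayed remark before the lemma.

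The main obstacle I anticipate is the bookkeeping in the non-emptiness step: the definition of ``covers'' is phrased with $Q$ fixed first and $\wt Q$ a $p$-group containing it with $D=\wt Q/Q$, whereas here we are handed $\wt Q$ and must reverse-engineer a valid $Q$. One must check that $Q$ can be taken as a normal subgroup of $\wt Q$ with $\NNN_{\wt G}(Q)\cap\wt Q$-structure compatible with the hypothesis ``$D\cap(N/Q)=1$, $M/Q=(N/Q)D$'' built into the DGN setup — i.e. that the block $\bl(\wt\theta)$ has a defect group meeting $N/Q$ trivially. This is exactly the kind of compatibility that \cite[Thm 5.2]{NavSpBrHtZero} and \cite[Thm.~9.12]{NavBl} are designed to supply, but verifying that the hypotheses of those theorems genuinely hold in the present normal-inclusion situation (in particular identifying $b$, $B$, $B'$, $b'$ correctly) is where the real work lies. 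The equivariance and Clifford-theoretic parts are then routine, using that $\pi_D$, hence $\ol\pi_D$, is $\NNN_{\wt G}(D)$-equivariant.
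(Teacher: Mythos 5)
Your overall strategy---restrict $\wt\theta$ to $\NNN_G(\wt Q)\wt Q/\wt Q$, realise a defect-zero constituent as a DGN correspondent, and use the two observations displayed before the lemma for the orbit statement---is the right one (the paper records no written proof, taking the lemma as a consequence of exactly this kind of argument, cf.\ \cite[\S 5]{BAWSp}, \cite{NaTi_red}). However, the non-emptiness half is not actually proved. You reduce it to ``$\eta=\ol\pi_D(\theta)$ for some $\theta$'' by invoking that $\ol\pi_D$ is ``an equivariant bijection onto the relevant defect-zero characters'', but $\pi_D$ is defined only blockwise, as a bijection $\Irr(b)\to\Irr(b')$ for a block $b$ that already satisfies the DGN hypotheses; nothing you cite guarantees that your constituent $\eta$ lies in such a block $b'$. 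The missing step is a block-theoretic construction: put $Q:=\wt Q\cap G$, $N:=\NNN_G(Q)$, $M:=N\wt Q$, $D:=\wt Q/Q$, and view $\eta$ as a character of $\Cent_{N/Q}(D)$ via the identification $\NNN_G(\wt Q)\wt Q/\wt Q\cong\NNN_{M/Q}(D)/D$ noted before \Cref{DfCovWt}. The block of $\NNN_{M/Q}(D)=\Cent_{N/Q}(D)\times D$ containing $\eta\times 1_D$ has defect group $D$ because $\eta$ has defect zero; by Brauer's first main theorem it induces a block $B$ of $M/Q$ with defect group $D$; any block $b$ of $N/Q$ covered by $B$ has defect zero (its defect groups are of the form $P\cap (N/Q)$ with $P$ a defect group of $B$, and all such intersections are trivial) and is $M/Q$-invariant (Fong--Reynolds), so $B$ is the unique block of $M/Q$ over $b$, and the unique $\theta\in\Irr(b)$ is $\wt Q$-invariant with $\pi_D(\theta)=\eta$, both $\Irr(b)$ and $\Irr(b')$ being singletons. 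Your proposed substitute check---that $\bl(\wt\theta)$ has a defect group meeting $N/Q$ trivially---is not the relevant condition, and you explicitly leave the step open (``where the real work lies''), so as it stands the existence of a covered weight is assumed rather than shown.

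The orbit half also rests on the wrong mechanism. The reason any two covered weights have conjugate first components is not ``canonicity of the DGN correspondent'' or data attached to $\bl(\wt\theta)$: it is the elementary fact that in \Cref{DfCovWt} the subgroup $Q$ is forced to equal $\wt Q\cap G$. Indeed $Q\le\wt Q\le\NNN_{\wt G}(Q)_\theta$, so $\wt Q\cap G\le\NNN_G(Q)=N$, and the condition $D\cap(N/Q)=1$ with $D=\wt Q/Q$ gives $(\wt Q\cap G)/Q=1$. (This also shows that your initial hesitation about the choice $Q=\wt Q\cap G$ was unfounded: it is the only possible choice, and this observation should open the proof.) Granting it, the orbit statement follows exactly from the two remarks preceding the lemma: for a fixed representative $(\wt Q,\wt\theta)$ all covered weights have first component $\wt Q\cap G$ and form a single $\NNN_{\wt G}(\wt Q)$-orbit; replacing the representative by $(\wt Q^g,\wt\theta^g)$ conjugates everything by $g$; and closure of the covered set under $\wt G$ is immediate from the definition of covering at the level of orbits. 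So the skeleton of your plan is sound, but both halves need these repairs before they constitute a proof.
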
  
  
\begin{rem}\label{CliffWei}
With the notation from \Cref{DfCovWt} this implies a Clifford-like partitioning of weights: 
	 \[\Alp(\wt G)= \bigsqcup_{\ol{(Q,\theta)}} \Alp(\wt G\mid \ol{(Q,\theta)}), \]
 where $\ol{(Q,\theta)}$ runs through a $\wt G$-transversal in $\Alp(G)$.
\end{rem}

It turns out that the sets $\Alp(\wt G\mid \ol{(Q,\theta)})$ are non-empty (see Corollary~\ref{wtYfromXabquo} below).

We require an application of a result from \cite{NaTi_red} about the consequences of DGN-correspondence.
Let us first recall the following notation:
\begin{notation}
If $G\lhd \wt G$ and $\chi\in \Irr(G)$ we set
\[
{\rm rdz}(\wt G\mid \chi):= \left\{ \wt \chi\in \Irr(\wt G\mid\chi)\ \big| \ \left(\frac{\wt\chi(1)}{\chi(1)}\right)_p=|\wt G/G|_p\right\},
\]
the set of so-called relative $p$-defect zero characters.
\end{notation}

\begin{thm}\label{DZBijRDZ}
Let $G\lhd \wt G$, $(Q,\theta)\in \Alp^0(G)$, $N:=\NNN_G(Q)$, $\wt N:=\NNN_{\wt G}(Q)_\theta$ and let $M/N$ be a $p$-subgroup of $\wt N/N$. 
Set $\wt Q/Q$ to be a defect group of the unique block of $M/Q$ covering $bl(\theta)$ and $\pi_{\wt Q/Q}(\theta) \in \Irr(\NNN_{N/Q}(\wt Q/Q))$ the DGN-correspondent. 

There exists a $\NNN_G(M)$-invariant extension $\wh \theta$ of $\theta$ to $M/Q$ and a $\Lin(\NNN_{\wt N}(M)/M)$- equivariant bijection 
\[\Delta_\theta:\rdz(\NNN_{\wt N}( M)\mid \wh \theta) \longrightarrow \dz(\NNN_{\wt G} (\wt Q)/\wt Q\mid \ol\pi_{\wt Q/Q}(\theta)),\] 
where $\bl(\varphi)=\bl(\varphi')^{\NNN_{\wt N}(M)}$ for every $\varphi\in \rdz(\NNN_{\wt N}(M)| \wh \theta)$, whenever $\varphi'\in\Irr(\NNN_{\wt G}(\wt Q))$ is the lift of $\Delta_\theta(\varphi)$.
\end{thm}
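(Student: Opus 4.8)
The plan is to build the bijection $\Delta_\theta$ by composing several known correspondences, with the DGN-correspondence $\pi_{\wt Q/Q}$ at the centre, and then to track blocks through the composition. First I would set up the extension $\wh\theta$: since $M/N$ is a $p$-group and $\theta\in\dz(N/Q)$ has a $p'$-degree character-theoretically it behaves well under extension, one shows that $\theta$ (inflated to $N/Q$) extends to $M/Q$, and that one may choose this extension to be invariant under $\NNN_G(M)$; the group $\NNN_G(M)/M$ acts on the set of extensions by multiplication with $\Lin(M/N)$, and a standard averaging/uniqueness argument (using that $M/N$ is a $p$-group while the obstruction lies in $p'$-part, cf.\ the Glauberman-type arguments in \cite{NaTi_red} and \cite{NavSpBrHtZero}) produces the canonical $\NNN_G(M)$-invariant $\wh\theta$. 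Because $\wh\theta$ is $\NNN_G(M)$-invariant, $\NNN_{\wt N}(M)$ stabilises it up to the action of $\wt N$, so $\rdz(\NNN_{\wt N}(M)\mid\wh\theta)$ is the natural domain.

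Next I would invoke the result of \cite{NaTi_red} that the excerpt is explicitly citing: given the $p$-subgroup $M/N$ of $\wt N/N$, the DGN-machinery relates characters of $\NNN_{\wt N}(M)$ lying over $\wh\theta$ to characters of $\NNN_{\wt G}(\wt Q)/\wt Q$ lying over the DGN-correspondent, in a degree-preserving-up-to-$p$-part fashion; this is exactly the content that upgrades $\pi_{\wt Q/Q}$ from a correspondence on $N/Q$-characters to one between the relative-defect-zero sets over $\wh\theta$ and the honest defect-zero sets over $\ol\pi_{\wt Q/Q}(\theta)$. Concretely, one identifies $\NNN_{M/Q}(\wt Q/Q)=\Cent_{N/Q}(\wt Q/Q)\times(\wt Q/Q)$ as noted before the statement, so that $\ol\pi_{\wt Q/Q}(\theta)$ makes sense, and then the relevant normaliser quotients match up: $\NNN_{M/Q}(\wt Q/Q)/(\wt Q/Q)\cong\NNN_G(\wt Q)\wt Q/\wt Q$ sits normally in $\NNN_{\wt G}(\wt Q)/\wt Q$, and one checks that passing from $\rdz$-characters of $\NNN_{\wt N}(M)$ to the corresponding characters over $\ol\pi_{\wt Q/Q}(\theta)$ is compatible with the $\Lin(\NNN_{\wt N}(M)/M)$-action on both sides (this action on the target is via the restriction/inflation of linear characters through the isomorphism of normaliser quotients). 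Here one uses that $\rdz$ is preserved and that relative defect zero over $\wh\theta$ on $\NNN_{\wt N}(M)$ corresponds to absolute defect zero on $\NNN_{\wt G}(\wt Q)/\wt Q$ precisely because $D_0$ was chosen trivial, i.e.\ $\wt Q/Q\cap N/Q=1$.

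The last part is the block-theoretic statement $\bl(\varphi)=\bl(\varphi')^{\NNN_{\wt N}(M)}$. For this I would trace Brauer correspondence through each step of the construction. The DGN-correspondence is by definition compatible with Brauer correspondence (that is how $B'$, $b'$ are set up in the recalled context), and the extension step $\theta\mapsto\wh\theta$ is block-preserving in the appropriate relative sense because $M/N$ is a $p$-group, so $\bl(\wh\theta)$ covers $\bl(\theta)$ with $\wt Q/Q$ in its defect group and the Brauer correspondent of $\bl(\wh\theta)$ from $\NNN_{\wt N}(M)$ down to a local subgroup is controlled by $\bl(\pi_{\wt Q/Q}(\theta))$. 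Then one assembles: if $\varphi'$ is the lift of $\Delta_\theta(\varphi)$ to $\NNN_{\wt G}(\wt Q)$, its block induces (via the Brauer homomorphism along $\wt Q$) to a block of $\NNN_{\wt N}(M)$, and one verifies this induced block is $\bl(\varphi)$ by checking it on a common defect group, using that all the correspondences in the chain are defined via the same local data $(\wt Q, M)$.

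The main obstacle I expect is the verification of $\Lin(\NNN_{\wt N}(M)/M)$-equivariance together with the precise block formula simultaneously: the linear-character action is transparent on the $\rdz$-side but on the $\dz(\NNN_{\wt G}(\wt Q)/\wt Q\mid\ol\pi_{\wt Q/Q}(\theta))$-side it is mediated by the (non-surjective, merely normal) inclusion $\NNN_G(\wt Q)\wt Q/\wt Q\hookrightarrow\NNN_{\wt G}(\wt Q)/\wt Q$, so one must be careful that restricting linear characters and the DGN-correspondence genuinely commute, and that the Brauer-correspondence bookkeeping is uniform across the $\Lin$-orbit rather than needing a separate argument for each character. Handling this cleanly will likely require reproving a small amount of the Clifford theory from \cite{NaTi_red} and \cite{BAWSp} in the relative-defect-zero language, rather than quoting it verbatim, so that the equivariance and the block statement fall out of the same construction.
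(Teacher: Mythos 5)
Your overall strategy (lean on the DGN machinery of \cite{NaTi_red} and \cite{BAWSp} for the core correspondence, then check equivariance and trace blocks) is the same as the paper's, but as written there is a concrete gap: you treat the cited results as directly producing a bijection onto characters of the \emph{full} normaliser quotient $\NNN_{\wt G}(\wt Q)/\wt Q$. They do not. What \cite[Thm~3.8]{BAWSp} (equivalently the relevant part of \cite[Thm~4.2]{NaTi_red}) gives is a $\Lin(\NNN_{\wt N}(M)/M)$-equivariant bijection
\[
\rdz(\NNN_{\wt N}(M)\mid \wh\theta)\longrightarrow \dz\bigl(\NNN_{\NNN_{\wt N}(M)}(\wt Q)/\wt Q \mid \ol\pi_{\wt Q/Q}(\theta)\bigr),
\]
i.e.\ it lands only in characters of the normaliser taken inside $\NNN_{\wt N}(M)$, which (since $\NNN_{\wt G}(\wt Q)\leq \NNN_{\wt G}(M)\cap\NNN_{\wt G}(Q)$) is the $\theta$-stabiliser level group $\NNN_{\wt G}(\wt Q)_\theta/\wt Q$, in general a proper subgroup of $\NNN_{\wt G}(\wt Q)/\wt Q$. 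The genuinely new content of the theorem, and the step your proposal never isolates, is the passage from this stabiliser level to the full normaliser: one needs the equality of stabilisers $\NNN_{\wt G}(\wt Q)_\theta=\NNN_{\wt G}(\wt Q)_{\pi_{\wt Q/Q}(\theta)}=\NNN_{\wt G}(\wt Q)_{\ol\pi_{\wt Q/Q}(\theta)}$, which follows from the $\NNN_{\wt G}(\wt Q)$-equivariance of the DGN correspondence (both $\NNN_G(Q)$ and $\NNN_G(\wt Q)$ being $\NNN_{\wt G}(\wt Q)$-stable), and then the Clifford correspondence given by character induction to obtain
\[
\dz\bigl(\NNN_{\NNN_{\wt N}(M)}(\wt Q)/\wt Q\mid \ol\pi_{\wt Q/Q}(\theta)\bigr)\longleftrightarrow \dz\bigl(\NNN_{\wt G}(\wt Q)/\wt Q\mid \ol\pi_{\wt Q/Q}(\theta)\bigr).
\]
Without this identification your claimed target set is simply not reached, and defect zero on the stabiliser level does not by itself give defect zero over $\ol\pi_{\wt Q/Q}(\theta)$ in the larger group.

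Relatedly, the ``main obstacle'' you flag (simultaneous $\Lin$-equivariance and block bookkeeping, possibly requiring a reproof of the Clifford theory of \cite{NaTi_red}) is not where the work lies: the $\Lin(\NNN_{\wt N}(M)/M)$-equivariance and the block statement are already part of the quoted results (the block induction claim is \cite[Cor.~2.11]{BAWSp}) and they pass through the final induction step without difficulty, precisely because the Clifford correspondence over $\ol\pi_{\wt Q/Q}(\theta)$ is itself induction and is compatible with block induction. So the fix is not to redo the relative-defect-zero Clifford theory, but to add the stabiliser identification via DGN-equivariance plus the Clifford correspondence step described above. Your sketch of the $\NNN_G(M)$-invariant extension $\wh\theta$ is fine but also already contained in the cited theorems.
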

\begin{proof}
	This is essentially \cite[Thm~4.2]{NaTi_red} and \cite[Thm~3.8]{BAWSp}.  
	Note that for the statement about block induction we apply \cite[Cor.~2.11]{BAWSp}. 

By \cite[Thm~3.8]{BAWSp} there is a bjiection
\[
\Delta_\theta:\rdz(\NNN_{\wt N}( M)/Q \mid \wh \theta) \longrightarrow \dz(\NNN_{\NNN_{\wt N}( M)} (\wt Q)/\wt Q \mid \ol\pi_{\wt Q/Q}(\theta))
\]
However $\NNN_{\wt G}(\wt Q)\leq \NNN_{\wt G}(M)\cap \NNN_{\wt G}(Q)$ and thus
\[
\NNN_{\NNN_{\wt N}( M)} (\wt Q)/\wt Q=\left( \NNN_{\NNN_{\wt G}(M)\cap \NNN_{\wt G}(Q)} (\wt Q)/\wt Q \right)_\theta=\NNN_{\wt G}(\wt Q)_\theta/\wt Q.
\]

Both $\NNN_G(Q)$ and $\NNN_G(\wt Q)$ are $\NNN_{\wt G}(\wt Q)$-stable and thus the equivariance of the DGN-correspondence (see \cite[5.2]{NavSpBrHtZero}) shows that
\[
\NNN_{\wt G}(\wt Q)_\theta=\NNN_{\wt G}(\wt Q)_{\pi_{\wt Q/Q}(\theta)}=\NNN_{\wt G}(\wt Q)_{\ol\pi_{\wt Q/Q}(\theta)}
\]
Hence Clifford correspondence, given by character induction, provides a bijection 
\[
\dz \left(\NNN_{\NNN_{\wt N}( M)}(\wt Q)/\wt Q \mid  \ol\pi_{\wt Q/Q}(\theta)\right) \leftrightarrow {\rm dz}\left( \NNN_{\wt G}(\wt Q)/\wt Q\mid \ol\pi_{\wt Q/Q}(\theta) \right).
\qedhere
\]

\end{proof}

\begin{rem}\label{rem28}
		 The construction of the (non-unique) map $\Delta_\theta$ is $\wG$-equivariant in the sense that $(\Delta_\theta)^x$ gives a map $\Delta_{\theta^x}$ with the above properties. 
\end{rem}

\begin{cor}\label{wtYfromXabquo}
Let $(Q,\theta)\in\Alp^0(G)$ and $G\lhd \wt G$.
Then $\Alp^0(\wt G\mid (Q,\theta))$ is non-empty.
\begin{proof} (See also \cite[5.10]{BAWSp} and its proof.)
By the classical construction (see \cite[Ch. 11]{IsaChTh}) the invariant character $\theta$ gives rise to a central extension $K$ of $\NNN_{\wt G}(Q)_{\theta}/\NNN_G(Q)$, with the quotient map $\epsilon: K\longrightarrow \NNN_{\wt G}(Q)_{\theta}/\NNN_G(Q)$ of kernel a central $p'$-group $Z$, together with a faithful $\nu\in \Irr(Z)$.
Any weight $(M,\kappa)$ of $K$ above $\nu$ corresponds via $\epsilon$ to some $\kappa'\in\rdz(H\mid \wt \theta)$, where $H/\NNN_G(Q)=\epsilon(\NNN_K(M))$. Via $\Delta_\theta$ from \Cref{DZBijRDZ}, $\kappa'$ defines a weight of $\wt G$ above $(Q,\theta)$.
\end{proof}
\end{cor}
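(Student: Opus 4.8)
The plan is to reduce, via the projective representation theory attached to $\theta$, to the classical fact that every $p$-block of a finite group contains a weight, and then to transport the resulting weight back along the bijection $\Delta_\theta$ of \Cref{DZBijRDZ}. Write $N:=\NNN_G(Q)$ and $\wt N:=\NNN_{\wt G}(Q)_\theta$, and regard $\theta$ as a $\wt N$-invariant character of $N$ with $Q$ in its kernel. By the standard theory of character triples (\cite[Ch.~11]{IsaChTh}, together with the reduction to a central $p'$-subgroup available here since $\theta$ has $p$-defect zero as a character of $N/Q$), the triple $(\wt N,N,\theta)$ is isomorphic to a triple $(K,Z,\nu)$ with $Z$ a central $p'$-subgroup of $K$, $\nu\in\Lin(Z)$ faithful, and an epimorphism $\epsilon\colon K\twoheadrightarrow\wt N/N$ of kernel $Z$; this isomorphism may be taken compatibly with block induction, so that for $N\le H\le\wt N$ with preimage $Z\le H^{\ast}\le K$ one gets bijections $\Irr(H\mid\theta)\leftrightarrow\Irr(H^{\ast}\mid\nu)$ which preserve the relative degree $\chi(1)/\theta(1)$ and respect blocks.

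Next I would exhibit a weight of $K$ lying over $\nu$. Since $Z$ is central of $p'$-order, $\nu$ determines a $p$-block of $Z$, and at least one $p$-block $\mathcal{B}$ of $K$ covers it. Such a $\mathcal{B}$ contains a weight: if $D$ is a defect group of $\mathcal{B}$, then its Brauer correspondent $b\in\Bl_p(\NNN_K(D))$ has defect group $D=O_p(\NNN_K(D))$, hence dominates a defect-zero block of $\NNN_K(D)/D$; any character $\kappa$ of that block yields $(D,\kappa)\in\Alp^0(K)$ with $\bl(\kappa)$ inducing $\mathcal{B}$ by Brauer's First Main Theorem. As block correspondences preserve the covered block of a central subgroup, the lift of $\kappa$ to $\NNN_K(D)$ lies over $\nu$, so $(D,\kappa)\in\Alp^0(K\mid\nu)$.

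Finally I would transport $(D,\kappa)$ back. As $Z$ is a $p'$-group, $D\cap Z=1$ and $D$ is the set of $p$-elements of $DZ$, so $D$ is determined by its image $\ol D$ in $K/Z\cong\wt N/N$; let $M$ be the $p$-subgroup of $\wt N$ with $M/N=\ol D$ (hence $N\le M$ and $M/N$ a $p$-group). Then $\NNN_K(D)$ is the $\epsilon$-preimage of $\NNN_{\wt N/N}(\ol D)$, and the character-triple isomorphism sends the lift of $\kappa$ to some $\chi\in\Irr(\NNN_{\wt N}(M)\mid\theta)$ with $\chi(1)/\theta(1)=\kappa(1)$. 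Comparing $p$-parts of degrees, using $\theta(1)_p=|N/Q|_p$ and that $Z$ is a $p'$-group, converts the weight condition on $(D,\kappa)$ into $\chi\in\rdz(\NNN_{\wt N}(M)\mid\wh\theta)$, the domain of $\Delta_\theta$ for the extension $\wh\theta$ of \Cref{DZBijRDZ}. Taking $\wt Q/Q$ to be the defect group of the block of $M/Q$ over $\bl(\theta)$, we get $\Delta_\theta(\chi)\in\dz(\NNN_{\wt G}(\wt Q)/\wt Q\mid\ol\pi_{\wt Q/Q}(\theta))$, that is $(\wt Q,\Delta_\theta(\chi))\in\Alp^0(\wt G\mid(Q,\theta))$ by \Cref{DfCovWt}, proving the claim.

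The step I expect to be the main obstacle is this last identification: matching $p$-subgroups of $K$ with $p$-subgroups of $\wt N$ modulo $N$, verifying that the character-triple isomorphism is genuinely compatible with block induction and with the specific extension $\wh\theta$ entering \Cref{DZBijRDZ}, and checking that ``weight of $K$ over $\nu$'' corresponds precisely to membership in the domain of $\Delta_\theta$. By contrast, the inputs ``every $p$-block has a weight'' and ``reduction to a central $p'$-triple'' are classical, though each deserves a careful reference.
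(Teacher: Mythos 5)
Your proposal is correct and follows essentially the same route as the paper: reduce via the character-triple/projective-representation construction to a central extension $K$ with central $p'$-kernel $Z$ and faithful $\nu\in\Irr(Z)$ (the $p'$-order of the kernel coming from the defect-zero property of $\theta$), produce a weight of $K$ over $\nu$, translate it into an element of $\rdz(\NNN_{\wt N}(M)\mid\wh\theta)$ and apply $\Delta_\theta$ from Theorem~\ref{DZBijRDZ}. The only difference is that you make explicit the existence of a weight of $K$ over $\nu$ (via a block covering $\bl(\nu)$ and Brauer's first main theorem), a step the paper leaves implicit by referring to the proof of \cite[5.10]{BAWSp}.
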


\subsection{Clifford theory of weights in a particular situation}
In this section we assume that $G$ and $\wt G$ from before satisfy additional properties, that hold for example when $\wt G/G$ is cyclic (see \Cref{Hyp:WeiBrExt}).
For applications in Section~3, we describe the structure of stabilisers of weights and Brauer characters under the following assumptions. 
	
\begin{hyp}\label{Hyp:WeiBrExt}
Let $G\lhd \wt G$ with abelian $\wt G/G$ such that 
	\begin{enumerate}
\item every $\varphi\in\IBr(G)$ extends to $\wG_\varphi$, and
\item for every $\ol{(Q,\theta)}\in \Alp(G)$ the character $\theta$ extends to $(\NNN_{\wt G}(Q)/Q)_\theta$.
	\end{enumerate} 

\noindent According to \Cref{DZBijRDZ}, Hypothesis \ref{Hyp:WeiBrExt}(2) is equivalent to:
	\begin{enumerate}
\item[(2')] for every $\ol{(\wt Q, \wt\theta)}\in\Alp(\wt G)$, any $\theta_0\in\IBr({\NNN_G(\wt Q)\wt Q/\wt Q}\mid \wt\theta)$ extends to $(\NNN_{\wt G}(\wt Q)/\wt Q)_{\theta_0}$.
\end{enumerate}
\end{hyp}

In \cite{SpIMDefChar} the following observation for ordinary characters was exploited: 
If $\chi\in \Irr(G)$ extends to $\wt G_\chi$, $\wt G/G$ is abelian and $\wt  \chi\in\Irr(\wG\mid \chi)$, 
Clifford theory shows that $$\wG_\chi=\bigcap_{\mu\in \Irr(\wG/G)_{\wt \chi}}\ker(\mu)$$ and $\Irr(\wG\mid \chi)$ is a $\Lin(\wG/G)$-orbit in $\Irr(\wG)$. We give an analogous description for Brauer characters. 

\begin{lm}\label{lem216}
Let $\varphi\in \IBr(G)$, $\wt \varphi\in\IBr(\wG\mid \varphi)$ and assume \Cref{Hyp:WeiBrExt}. 
Set ${\rm J}_G(\wt \varphi):=\wG_\varphi \wG_{p'} $, where $\wG_{p'}/G$ is the Hall $p'$-subgroup of $\wG/G$. Then 
\begin{enumerate}[(a)]
\item $\IBr(\wG\mid \varphi)$ is a $\LinBr(\wG/G)$-orbit,  and
\item \[\wG_{\varphi}=\left( \bigcap_{\mu\in\LinBr(\wG/G)_{\wt \varphi} }\ker(\mu)\right) \cap {\rm J}_G(\wt\varphi).\] 
\end{enumerate}
\end{lm}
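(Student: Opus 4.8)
The plan is to mimic the argument sketched just before the statement for ordinary characters, replacing $\Irr$ by $\IBr$ throughout, but tracking the $p$-part of $\wt G/G$ carefully since a Brauer character of $G$ need not ``see'' the $p$-part of $\wt G/G$ when one induces or restricts. First I would fix $\wt\varphi\in\IBr(\wt G\mid\varphi)$ and, using Hypothesis~\ref{Hyp:WeiBrExt}(1), choose an extension $\varphi_0\in\IBr(\wt G_\varphi)$ of $\varphi$. Since $\wt G/G$ is abelian, $\wt G_\varphi/G$ and the Hall $p'$-subgroup $\wt G_{p'}/G$ generate ${\rm J}_G(\wt\varphi)/G=\wt G_\varphi\wt G_{p'}/G$, and one checks that $\wt G_\varphi$ acts trivially on $\IBr(G\mid\varphi)$, so ${\rm J}_G(\wt\varphi)$ is precisely the subgroup one must pass through: $\varphi$ extends to ${\rm J}_G(\wt\varphi)$ (extend first to $\wt G_\varphi$, then extend across the $p'$-quotient ${\rm J}_G(\wt\varphi)/\wt G_\varphi$ using that $p'$-index extensions of Brauer characters behave like the ordinary ones via $\LinBr$). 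This reduces the whole problem to the two cases $\wt G/G$ a $p'$-group and $\wt G/G$ a $p$-group, which I would handle separately and then splice together.

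For part (a): when $\wt G/G$ is a $p'$-group, $\chi\mapsto\chi^0$ identifies $\Lin(\wt G/G)$ with $\LinBr(\wt G/G)$ and $\Irr(\wt G/G)_{\wt\chi}$-type arguments transfer verbatim — the ordinary statement quoted from \cite{SpIMDefChar} gives that $\Irr(\wt G\mid\chi)$ is a $\Lin(\wt G/G)$-orbit, and reducing mod $p$ (using that restriction to $p$-regular elements is compatible with tensoring by linear $p'$-characters and with induction) shows $\IBr(\wt G\mid\varphi)$ is a single $\LinBr(\wt G/G)$-orbit. When $\wt G/G$ is a $p$-group, $\LinBr(\wt G/G)$ is trivial, so I must instead show $\IBr(\wt G\mid\varphi)$ is a \emph{singleton}: this is exactly where Hypothesis~\ref{Hyp:WeiBrExt}(1) together with Hypothesis~\ref{Hyp:WeiBrExt}(2), reformulated as (2$'$), enters — if $\varphi$ is $\wt G$-invariant it extends to $\wt G$, and Gallagher's theorem for Brauer characters (Clifford theory, \cite{NavBl}) says the constituents of $\IBr(\wt G\mid\varphi)$ are the extensions twisted by $\LinBr(\wt G/G)=1$, hence unique; if $\varphi$ is not $\wt G$-invariant, Clifford correspondence moves the count to $\wt G_\varphi$, a strictly smaller group, and one finishes by induction on $|\wt G:G|$. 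The general abelian case follows by combining: $\IBr(\wt G\mid\varphi_0')$ for the chosen extension $\varphi_0'$ of $\varphi$ to ${\rm J}_G(\wt\varphi)$ is a singleton by the $p$-case applied to ${\rm J}_G(\wt\varphi)\lhd\wt G$ (note $\wt G/{\rm J}_G(\wt\varphi)$ is a $p$-group), while the $\LinBr(\wt G/G)$-orbit structure is supplied by the $p'$-layer $G\lhd{\rm J}_G(\wt\varphi)$; transitivity of the orbit is then a bookkeeping exercise with $\LinBr$.

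For part (b): the inclusion $\supseteq$ is automatic once (a) is known, since for $\mu\in\LinBr(\wt G/G)_{\wt\varphi}$ we have $\wt G_\varphi\leq\ker\mu$ (as $\mu$ is trivial on $G$ and $\wt\varphi\otimes\mu=\wt\varphi$ forces $\mu$ trivial on $\wt G_\varphi$ by Gallagher), and $\wt G_\varphi\leq{\rm J}_G(\wt\varphi)$ by definition. For $\subseteq$: take $x$ in the right-hand side; then $x\in{\rm J}_G(\wt\varphi)$, and I must show $\varphi^x=\varphi$. Write ${\rm J}_G(\wt\varphi)=\wt G_\varphi\wt G_{p'}$ and decompose $x$ according to the abelian group $\wt G/G$ as a product of a $p$-part and a $p'$-part modulo $G$; the $p$-part already lies in $\wt G_\varphi$ (here one uses that $\IBr(\wt G_\varphi\mid\varphi)$ being a singleton, from the $p$-group case, pins down the $p$-part of the stabiliser — more precisely, the $p$-part of $\wt G/G$ fixing $\varphi$ is all of the $p$-part of ${\rm J}_G(\wt\varphi)/G$), while the $p'$-part must be killed by every $\mu\in\LinBr(\wt G/G)_{\wt\varphi}$, and since $\LinBr(\wt G/G)\cong\Lin(\wt G/G)_{p'}$ separates the elements of the $p'$-part of the abelian group $\wt G/G$, being in all such kernels forces the $p'$-part of $x$ into $\wt G_\varphi$ as well. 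Hence $x\in\wt G_\varphi$.

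The main obstacle I anticipate is the careful handling of the $p$-part of $\wt G/G$: Brauer characters are blind to $p$-elements, so statements like ``$\varphi$ extends to $\wt G_\varphi$'' (Hypothesis~\ref{Hyp:WeiBrExt}(1)) must be combined correctly with the $\LinBr$-orbit picture, and one must be sure that the definition ${\rm J}_G(\wt\varphi)=\wt G_\varphi\wt G_{p'}$ is exactly the right subgroup — neither too big (so that (b) can fail on the $\supseteq$ side) nor too small (so that (a) loses transitivity). Verifying that the $p$-part of the stabiliser of $\varphi$ in $\wt G/G$ coincides with the $p$-part of ${\rm J}_G(\wt\varphi)/G$, which is where Hypothesis~\ref{Hyp:WeiBrExt}(1) is genuinely used and where the analogy with the ordinary case in \cite{SpIMDefChar} is least transparent, will be the delicate point; everything else is a transcription of standard Clifford theory for Brauer characters from \cite{NavBl}.
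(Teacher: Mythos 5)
Your argument for part (a) breaks at its first step: you ``choose an extension $\varphi_0'$ of $\varphi$ to ${\rm J}_G(\wt\varphi)$'', but no such extension exists in general, since $\varphi$ is not ${\rm J}_G(\wt\varphi)$-invariant. The stabiliser of $\varphi$ in $\wG$ is exactly $\wG_\varphi$, and ${\rm J}_G(\wt\varphi)=\wG_\varphi\wG_{p'}$ strictly contains $\wG_\varphi$ whenever $\wG_{p'}\not\leq\wG_\varphi$ (for instance $\wG/G$ of prime order $\ell\neq p$ with $\varphi$ not $\wG$-invariant; Hypothesis~\ref{Hyp:WeiBrExt} can perfectly well hold there, as it only requires extendibility to the stabiliser). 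For the same reason your description of $G\lhd{\rm J}_G(\wt\varphi)$ as ``the $p'$-layer'' is incorrect: ${\rm J}_G(\wt\varphi)/G$ contains the whole $p$-part of $\wG_\varphi/G$. Hence the splicing of the $p$-case and the $p'$-case through ${\rm J}_G(\wt\varphi)$, on which your general case of (a) rests, does not go through as written. (Two smaller points: in the $p$-power-index case the singleton statement needs no hypothesis at all, since an invariant Brauer character extends uniquely across a $p$-quotient, and Hypothesis~\ref{Hyp:WeiBrExt}(2') concerns weights and plays no role here; and ``reducing the ordinary statement of \cite{SpIMDefChar} mod $p$'' in the $p'$-case is not available, because ordinary extendibility to $\wG_\chi$ is nowhere assumed.)

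The paper's proof needs no decomposition of $\wG/G$ at all: take the Clifford correspondent $\wt\varphi_0\in\IBr(\wG_\varphi\mid\varphi)$ of $\wt\varphi$; by Hypothesis~\ref{Hyp:WeiBrExt}(1), and since $\wG_\varphi/G$ is abelian so that every element of $\IBr(\wG_\varphi\mid\varphi)$ is an extension, $\wt\varphi_0$ extends $\varphi$, and then \cite[8.20]{NavBl} gives $\IBr(\wG\mid\varphi)=\{(\wt\varphi_0\mu')^{\wG}\mid\mu'\in\IBr(\wG_\varphi/G)\}$, which is precisely the $\LinBr(\wG/G)$-orbit of $\wt\varphi$ because each $\mu'$ extends to $\wG/G$. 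The subgroup ${\rm J}_G(\wt\varphi)$ enters only in (b), and only through the trivial facts $\wG_\varphi\leq{\rm J}_G(\wt\varphi)$ and $p\nmid|{\rm J}_G(\wt\varphi):\wG_\varphi|$, intersected with the common kernel, which has $p$-power index over $\wG_\varphi$. Your argument for (b) is close to this, but it implicitly needs the converse inclusion $\{\mu\in\LinBr(\wG/G)\mid\wG_\varphi\leq\ker(\mu)\}\subseteq\LinBr(\wG/G)_{\wt\varphi}$; without it there may not be enough $\mu$ fixing $\wt\varphi$ to separate the $p'$-part of your element $x$ modulo $\wG_\varphi$. That inclusion again comes from the Clifford-correspondence computation over $\wG_\varphi$ (via $\wt\varphi=\wt\varphi_0^{\wG}$), not from the bare fact that $\LinBr(\wG/G)$ separates $p'$-elements of $\wG/G$. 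Once (a) is established along these lines, your concluding index argument for (b) is fine and is essentially the one in the paper.
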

\begin{proof}
Let $\wt \varphi_0\in \IBr(\wt G_{\varphi}\mid \varphi)$ with $\wt \varphi_0^{\wt G}=\wt \varphi$. Then $\wt\varphi_0$ is an extension of $\varphi$ by Hypothesis~\ref{Hyp:WeiBrExt}.
According to \cite[8.20]{NavBl} we have $\IBr(\wt G\mid \varphi)=\{ (\wt \varphi_0 \mu')^{\wG}  \mid \mu' \in \IBr(\wG_\varphi /G)\}$. This proves part (a). 

Because of  $\wt \varphi_0^{\wt G}=\wt \varphi$ we see that $\wG_\varphi\leq \bigcap_{\mu\in\LinBr(\wG/G)_{\wt \varphi} }\ker(\mu)$. 
By definition ${\rm J}_G(\wt\varphi)$ satisfies $\wG_{\varphi}\leq {\rm J}_G(\wt\varphi)$ and $ p\nmid  |{\rm J}_G(\wt\varphi):\wG_{\varphi}|  $. Since $| \bigcap_{\mu\in\LinBr(\wG/G)_{\wt \varphi} }\ker(\mu):\wG_\varphi|$ is a power of $p$, this implies (b). 
\end{proof}

The above statement has an analogue for a weight $\olQtheta\in\Alp(G)$, i.e. $\wG_{\olQtheta}$  and $\Alp(\wG\mid \olQtheta)$ can be described in a similar way.

\begin{lm}\label{lem29}
	Let $\ol{(Q,\theta)}\in\Alp(G)$, $\ol{(\wt Q,\wt \theta)}\in\Alp(\wt G\mid \ol{(Q,\theta)})$ and assume \Cref{Hyp:WeiBrExt}. 
Set	${\rm J}_{G}(\ol{(\wt Q, \wt \theta)}):=\wG_{\ol{(Q,\theta)}} \wG_{p'} $, where $\wG_{p'}/G$ is the Hall $p'$-subgroup of $\wG/G$. Denote  $N:=\NNN_G(Q)$ and $\wt N:=\NNN_{\wt G}(Q)_\theta$.
	Then 
	\begin{enumerate}[(a)]
\item $\Alp(\wG\mid \ol{(Q,\theta)})$ forms an $\LinBr(\wt G/G)$-orbit in $\Alp(\wG)$, 
and
\item $$\wG_ {\ol{(Q,\theta)}}= \left( \bigcap_{\mu\in \LinBr(\wt G/G)_{\olwtQwttheta}} \ker(\mu)\right) \cap {\rm J}_G(\olwtQwttheta).$$ 
	\end{enumerate}
\end{lm}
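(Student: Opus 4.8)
The statement is the weight-analogue of \Cref{lem216}, so the plan is to mimic that proof while substituting the DGN-based translation provided by \Cref{DZBijRDZ} for the ordinary Clifford theory used there. Fix $\olwtQwttheta\in\Alp(\wt G\mid\olQtheta)$; by \Cref{covWeight} and \Cref{CliffWei} it suffices to work inside the part $\Alp(\wt G\mid\olQtheta)$, and by the $\wt G$-conjugacy remarks before \Cref{covWeight} we may replace the orbit $\olQtheta$ by a fixed representative $(Q,\theta)$ so that $\wt\theta\in\dz(\NNN_{\wt G}(\wt Q)/\wt Q\mid\ol\pi_D(\theta))$ with $D=\wt Q/Q$. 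Set $N=\NNN_G(Q)$, $\wt N=\NNN_{\wt G}(Q)_\theta$ and let $M/N$ be the $p$-subgroup of $\wt N/N$ with $\wt Q/Q$ a defect group of the block of $M/Q$ over $\bl(\theta)$, as in \Cref{DZBijRDZ}. The first step is to transport everything through $\Delta_\theta$: by \Cref{DZBijRDZ} there is a $\Lin(\NNN_{\wt N}(M)/M)$-equivariant bijection $\Delta_\theta\colon\rdz(\NNN_{\wt N}(M)\mid\wh\theta)\to\dz(\NNN_{\wt G}(\wt Q)/\wt Q\mid\ol\pi_D(\theta))$, and under this bijection the stabiliser of $\olwtQwttheta$ in $\wt G$, intersected with the relevant normalisers, corresponds to the stabiliser of the matching $\varphi\in\rdz(\NNN_{\wt N}(M)\mid\wh\theta)$.

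\textbf{Second step: apply Hypothesis \ref{Hyp:WeiBrExt} and the $\rdz$-Clifford theory.} Since $\wh\theta$ is a $\NNN_G(M)$-invariant extension of $\theta$ to $M/Q$ and $\wt G/G$ is abelian, the relative defect-zero set $\rdz(H\mid\wh\theta)$ (for $\NNN_G(M)\le H\le\NNN_{\wt G}(M)_{\wh\theta}$, $H/\NNN_G(M)\le\wt N/N$) behaves under the $\Lin$-action exactly like an ordinary $\Irr$ over an extendible character: $\rdz(\NNN_{\wt N}(M)\mid\wh\theta)$ is a single $\Lin(\NNN_{\wt N}(M)/\NNN_G(M))$-orbit once we know $\wh\theta$ extends to its stabiliser, and the stabiliser of a member is the intersection of the kernels of the linear characters fixing it — this is the standard Gallagher/Clifford argument used in \cite{SpIMDefChar} and recalled just before \Cref{lem216}. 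Combining the $\Delta_\theta$-equivariance of the previous step with this, part (a) follows: $\Alp(\wt G\mid\olQtheta)$ is a single $\Lin(\wt N/N)_{p'}$-orbit, and restricting the linear characters of $\wt N/N$ along $\wt G/G$ (they are all obtained by inflation since $\wt G/G$ is abelian and $\wt N\ge G$, so $\wt N/N\hookrightarrow\wt G/G$ after identifying — more precisely, the $\Lin$-orbit on the $\Delta_\theta$-side pushes to a $\LinBr(\wt G/G)$-orbit on the weight side because passing to the $\wt G$-orbit of the weight absorbs exactly the ambiguity) gives that $\Alp(\wt G\mid\olQtheta)$ is a $\LinBr(\wt G/G)$-orbit in $\Alp(\wt G)$.

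\textbf{Third step: the index argument for (b).} With (a) in hand, part (b) is purely formal, exactly as in the last paragraph of the proof of \Cref{lem216}. From $\olwtQwttheta\in\Alp(\wt G\mid\olQtheta)$ and part (a) one gets $\wt G_{\olQtheta}\le\bigcap_{\mu\in\LinBr(\wt G/G)_{\olwtQwttheta}}\ker(\mu)$, since any $g\in\wt G_{\olQtheta}$ fixes $\olQtheta$ hence permutes $\Alp(\wt G\mid\olQtheta)$ trivially on the $\LinBr$-label of $\olwtQwttheta$. The definition of ${\rm J}_G(\olwtQwttheta)=\wt G_{\olQtheta}\wt G_{p'}$ gives $\wt G_{\olQtheta}\le{\rm J}_G(\olwtQwttheta)$ with $p\nmid|{\rm J}_G(\olwtQwttheta):\wt G_{\olQtheta}|$, whereas $|\bigcap_\mu\ker(\mu):\wt G_{\olQtheta}|$ is a power of $p$ because $\LinBr(\wt G/G)/\LinBr(\wt G/G)_{\olwtQwttheta}\cong\Alp(\wt G\mid\olQtheta)$ has $p'$-order — wait, more carefully: $\bigcap_\mu\ker(\mu)\supseteq\wt G_{\olQtheta}$ and the quotient embeds into $\LinBr(\wt G/G)_{\olwtQwttheta}^\vee$ which is a $p'$-group, so in fact $|\bigcap_\mu\ker(\mu):\wt G_{\olQtheta}|$ is a $p'$-number; intersecting with ${\rm J}_G(\olwtQwttheta)$, whose index over $\wt G_{\olQtheta}$ is also a $p'$-number, and noting the two opposite containments forces equality. (This is the one place I should double-check the parity bookkeeping, mirroring the argument of \Cref{lem216}(b) with $\LinBr$ in place of $\Irr$, using that $\LinBr(\wt G/G)\cong\Lin(\wt G/G)_{p'}$ is a $p'$-group so that every stabiliser index it produces is prime to $p$.)

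\textbf{Main obstacle.} The genuinely delicate point is the first/second step: making the identification of $\wt G$-stabilisers of weights with $\Lin$-stabilisers on the $\rdz$-side completely precise, because $\Delta_\theta$ is only defined up to choices and one must invoke \Cref{rem28} (the $\wt G$-equivariance of the construction $\theta\mapsto\Delta_\theta$) to see that conjugation by $g\in\wt G_{\olQtheta}$ — which moves $\theta$ within its $\NNN_{\wt G}(\wt Q)$-orbit and hence requires replacing $\Delta_\theta$ by $\Delta_{\theta^g}$ — still respects the orbit structure. Equivalently one can sidestep this by passing, as in \Cref{wtYfromXabquo}, to the central extension $K$ of $\NNN_{\wt G}(Q)_\theta/N$ attached to $\theta$, where the $\rdz$-set becomes an honest weight set $\Alp^0(K\mid\nu)$ and \Cref{lem216} (or rather its proof) applies verbatim; then one pushes the conclusion back along $\epsilon$ and $\Delta_\theta$. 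I would carry out the argument in that second, cleaner formulation to avoid the non-canonicity of $\Delta_\theta$.
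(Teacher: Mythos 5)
Your part (a) follows the paper's own route (transport through $\Delta_\theta$ from \Cref{DZBijRDZ}, then Gallagher on the relative defect zero side, using that by \Cref{Hyp:WeiBrExt} all of $\Irr(\wt N/Q\mid\theta)$ consists of extensions of $\theta$ so that $\rdz(\NNN_{\wt N}(M)\mid\wh\theta)$ is a $\Lin(\wt N/M)$-orbit), and your alternative via the central extension $K$ of \Cref{wtYfromXabquo} is a legitimate way to tame the non-canonicity of $\Delta_\theta$; apart from some imprecision about which linear character group acts, this is fine.

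The problem is part (b), where your ``corrected'' parity bookkeeping is backwards and the argument as written fails. In \Cref{lem216}(b) the point is that $\bigl|\bigcap_{\mu}\ker(\mu):\wG_{\varphi}\bigr|$ is a \emph{power of $p$}, while $p\nmid|{\rm J}_G(\wt\varphi):\wG_\varphi|$; only because the two indices are coprime does intersecting the two overgroups of $\wG_\varphi$ give back $\wG_\varphi$. You first assert the $p$-power claim (with an unsound justification via the orbit size), then overrule yourself and claim the index is a $p'$-number ``because the quotient embeds into $\LinBr(\wt G/G)_{\olwtQwttheta}^{\vee}$''; that embedding describes $\wG/\bigcap_\mu\ker(\mu)$, not $\bigcap_\mu\ker(\mu)/\wG_{\olQtheta}$, and if both indices over $\wG_{\olQtheta}$ really were $p'$-numbers then ``the two opposite containments'' would force nothing (e.g.\ both overgroups could be $\wG$), so the conclusion does not follow. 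Likewise your justification of the containment $\wG_{\olQtheta}\le\bigcap_\mu\ker(\mu)$ (``$g$ permutes the $\LinBr$-label trivially'') is not an argument: $\wG$ acts trivially on $\Alp(\wG)$ anyway. What is actually needed -- and what the paper does by applying the arguments of \Cref{lem216}(b) to $\wG_{\olQtheta}=G\NNN_{\wG}(Q)_\theta=G\wt N$ together with the $\Lin(\wt N/M)$-compatibility of $\Delta_\theta$ -- is the weight analogue of ``$\wt\varphi_0^{\wG}=\wt\varphi$'': under $\Delta_\theta$ the weight $\olwtQwttheta$ corresponds to a character of $\wt N/Q$ that is an extension of $\theta$, so Gallagher identifies $\LinBr(\wG/G)_{\olwtQwttheta}$ with the set of $\mu$ trivial on $G\wt N=\wG_{\olQtheta}$; this yields both the containment and the fact that $\bigl|\bigcap_\mu\ker(\mu):\wG_{\olQtheta}\bigr|$ is a $p$-power, after which intersecting with ${\rm J}_G(\olwtQwttheta)$ gives (b). This identification of the stabiliser is the missing step in your write-up.
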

\begin{proof} Let $M:=\wt Q N$.
	Note that $\wt N/N\cong G\wt N/G$ is abelian and by Hypothesis \ref{Hyp:WeiBrExt} all characters of $\Irr(\wt N/Q\mid \theta)$ are extensions of $\theta$.  
	Let $\Delta_\theta$ be the map from Theorem~\ref{DZBijRDZ} for the extension  $\wh \theta$ of $\theta$ to $M/Q$. 
	Then the character $\wt \theta$ corresponds to some character in $\rdz(\NNN_\wN(M)\mid \wh \theta  )$ via the bijection $\Delta_\theta$. The set $\rdz(\NNN_\wN(M)\mid \wh \theta  )$ is only non-empty if $M/N$ is the Sylow $p$-subgroup of $\wt N/N$.
	
	Note that all extensions of $\theta$ to $M/Q$ lie in the same block and have
	$\wt Q/Q$ as a defect group. Hence every weight in $\Alp(\wG\mid \overline {(Q,\theta)} )$ is of the form $\overline{(\wt Q, \wt \theta')}$ 
	for some $\wt \theta'\in \dz(\NNN_\wN(\wt Q)/\wt Q \mid \overline \pi_{\wt Q/Q}(\theta))$. 
	The bijection $\Delta_\theta$ from \Cref{DZBijRDZ} is $\Lin(\wt N/M)$-compatible and $\rdz(\NNN_{\wt N}(M)\mid \wh \theta)$ forms a $\Lin(\wt N/M)$-orbit. Accordingly $\dz(\NNN_\wN(\wt Q)/\wt Q \mid \overline \pi_{\wt Q/Q}(\theta))$ is a $\Lin(\wt N/M)$-orbit as well. This proves part (a). 
	
 	For part (b) we apply the arguments from Lemma \ref{lem216}(b) to $\wt G_{\overline{( Q,\theta)}}=G \NNN_\wG( Q)_\theta= G \wt N$ and use that $\Delta_\theta$ is $\Lin(\wt N/M)$-compatible.
\end{proof}

\begin{lm}\label{WeightCorrYtoX}
Assume $G\lhd \wt G$ satisfies Hypothesis~\ref{Hyp:WeiBrExt}.
	Then there is a bijection 
	\[ \Pi: \Alp(\wt G)/_{\sim\LinBr(\wt G/G)} \longrightarrow \Alp(G)/_{\sim_{\wt G/G}},\] 
	which sends each $\LinBr(\wt G/G)$-orbit containing $\ol{(\wt Q,\wt \theta)}$ to the $\wt G$-orbit of weights of $G$ covered by $\ol{(\wt Q,\wt \theta)}$.
\end{lm}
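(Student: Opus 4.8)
The statement asserts that the assignment ``$\LinBr(\wt G/G)$-orbit of $\ol{(\wt Q,\wt\theta)}$ $\mapsto$ the $\wt G$-orbit of weights of $G$ it covers'' is a well-defined bijection. First I would check that $\Pi$ is well defined. By \Cref{covWeight} every $\ol{(\wt Q,\wt\theta)}\in\Alp(\wt G)$ covers a non-empty $\wt G$-orbit in $\Alp(G)$, so the target is meaningful. To see that $\LinBr(\wt G/G)$-conjugate weights of $\wt G$ cover the same $\wt G$-orbit, I would use the action of $\Lin(\wt G/G)_{p'}\cong\LinBr(\wt G/G)$ on weights: for $\mu\in\LinBr(\wt G/G)$ the weight $\mu.\ol{(\wt Q,\wt\theta)}$ has the same first component $\wt Q$ and $\wt\theta$ twisted by the (trivial-on-$G$) restriction of $\mu$, hence $\NNN_{\NNN_G(\wt Q)\wt Q/\wt Q}$-restriction of $\mu.\wt\theta$ agrees with that of $\wt\theta$ on the relevant normal subgroup; combined with the definition of covering via $\overline\pi_{\wt Q/Q}(\theta)$ and \Cref{DfCovWt}, the covered $\wt G$-orbit is unchanged. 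Thus $\Pi$ descends to a map on $\LinBr(\wt G/G)$-orbits.

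Next, surjectivity: given $\ol{(Q,\theta)}\in\Alp(G)$, \Cref{wtYfromXabquo} produces $(\wt Q,\wt\theta)\in\Alp^0(\wt G\mid(Q,\theta))$, and by the remarks following \Cref{DfCovWt} the $\wt G$-orbit covered by $\ol{(\wt Q,\wt\theta)}$ is exactly $\ol{(Q,\theta)}$ up to $\wt G$-conjugacy, so $\Pi(\ol{(\wt Q,\wt\theta)})$ hits the class of $\ol{(Q,\theta)}$. For injectivity, suppose $\ol{(\wt Q_1,\wt\theta_1)}$ and $\ol{(\wt Q_2,\wt\theta_2)}$ cover the same $\wt G$-orbit of weights of $G$, say both cover $\ol{(Q,\theta)}$ after $\wt G$-conjugation. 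Replacing the second pair by a $\wt G$-conjugate we may assume $\wt Q_1=\wt Q_2=:\wt Q$ and both lie in $\Alp^0(\wt G\mid(Q,\theta))$ for a common $(Q,\theta)$. Then $\wt\theta_1,\wt\theta_2\in\dz(\NNN_{\wt G}(\wt Q)/\wt Q\mid\overline\pi_{\wt Q/Q}(\theta))$, and by \Cref{lem29}(a) the full set $\Alp(\wt G\mid\ol{(Q,\theta)})$ is a single $\LinBr(\wt G/G)$-orbit; hence $\ol{(\wt Q_1,\wt\theta_1)}$ and $\ol{(\wt Q_2,\wt\theta_2)}$ lie in the same $\LinBr(\wt G/G)$-orbit, as required.

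Finally, to make the preceding reductions legitimate I would record that $\Alp(\wt G)=\bigsqcup_{\ol{(Q,\theta)}}\Alp(\wt G\mid\ol{(Q,\theta)})$ by \Cref{CliffWei} and \Cref{covWeight}, so every $\LinBr(\wt G/G)$-orbit in $\Alp(\wt G)$ lies inside a single piece $\Alp(\wt G\mid\ol{(Q,\theta)})$; then $\Pi$ is precisely the map that sends each such piece to the class of $\ol{(Q,\theta)}$ in $\Alp(G)/_{\sim_{\wt G/G}}$, and \Cref{lem29}(a) says each piece is one $\LinBr(\wt G/G)$-orbit, giving a bijection between the pieces and the $\wt G$-classes of weights of $G$ that index them.

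\textbf{Main obstacle.} The delicate point is the bookkeeping in the injectivity step: one must pass from ``covers the same $\wt G$-orbit'' to a situation with a \emph{common} underlying $(Q,\theta)$ and a \emph{common} defect group $\wt Q/Q$, keeping track of how $\NNN_{\wt G}(\wt Q)$-conjugation interacts with the stabiliser $\wt N=\NNN_{\wt G}(Q)_\theta$ and with the DGN-correspondent $\overline\pi_{\wt Q/Q}(\theta)$; the equivariance statements for $\pi_D$ from \cite[5.2]{NavSpBrHtZero} and for $\Delta_\theta$ (\Cref{rem28}) are exactly what is needed to do this cleanly, and invoking \Cref{lem29}(a) at the end is what closes the argument.
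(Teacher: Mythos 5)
Your proposal is correct and follows essentially the same route as the paper: the paper likewise defines $\Pi$ on the pieces $\Alp(\wt G\mid\ol{(Q,\theta)})$ of the partition from \Cref{CliffWei} and cites \Cref{covWeight} and \Cref{lem29} for well-definedness and injectivity, and \Cref{wtYfromXabquo} for surjectivity. Your extra direct verification that the $\LinBr(\wt G/G)$-action does not change the covered $\wt G$-orbit (and the normalisation $\wt Q_1=\wt Q_2$, which is not actually needed once \Cref{lem29}(a) is invoked) just spells out details the paper leaves implicit.
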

\begin{proof}
For every $\overline{(Q,\theta)}\in\Alp(G)$ we define $\Pi$ by mapping any element of $\Alp(\wG \mid \ol{(Q,\theta)})$ to the $\wG$-orbit containing  $\ol{(Q,\theta)}$.  According to \Cref{covWeight} and \Cref{lem29} this map is well-defined and injective. Following \Cref{wtYfromXabquo} the map is surjective. 
\end{proof}


\section{The inductive AW condition}

The following definition is equivalent to the one stated in \cite[\S 3]{NaTi_red} (see also \cite{KosSpAMBAWCy}), although the version presented here uses the notion of weight, instead of sets $\IBr(G\mid Q)$.

\begin{df}\label{IndAWCond}
Let $p$ be a prime, $S$ a finite non-abelian simple group, $G$ a $p'$-covering group of $S$ (that is a maximal perfect group of which $S$ is a central quotient by a $p'$-group).
We say that {\bf the inductive AW condition holds for $S$ and $p$} if the following statements are satisfied: 
\begin{asslist}

\item\label{indAWiii}
There exists an $\Aut(G)$-equivariant bijection $$\Omega:\IBr(G)\rightarrow \Alp(G)$$ such that $\Omega(\IBr(G\mid \nu))=\Alp(G\mid\nu)$ for every $\nu\in\IBr(\ZZ(G))$.

\item For every $\varphi\in \IBr(G)$ and $\ol{(Q,\theta)}=\Omega(\varphi)$, there exists a finite group $A:=A(\varphi,Q)$ and characters $\wt\varphi\in\IBr(A)$ and $\wt\theta\in\IBr(\NNN_A(Q))$ such that
\begin{enumerate}
\item The group $A$ satisfies $G\lhd A$, $A/{\rm C}_A(G)\cong \Aut(G)_\varphi$, ${\rm C}_A(G)=Z(A)$ and $p\nmid |Z(A)|$.
\item $\wt\varphi\in\IBr(A)$ is an extension of $\varphi$.
\item $\wt\theta\in\IBr(\NNN_A(\ol Q))$ is an extension of $\Omega(\varphi)^0$.
\item $\IBr({\ZZ(A)}\mid\wt \varphi)=\IBr( {\ZZ(A)}\mid\wt \theta)$.
\end{enumerate}
\end{asslist}
\end{df} 

\begin{rem}\label{CondiiiProjRep}
In \cite{SpIndCharTrip} the inductive conditions for some of the local-global conjectures were rephrased in terms of character triples.
In particular, by \cite[Thm 4.3]{SpIndCharTrip}, the requirement on $(\varphi,\Omega(\varphi))$ in \Cref{IndAWCond}(ii) is equivalent to showing that 
\[
(G\rtimes \Aut(G)_{\varphi},G,\varphi)\succ_{F,c} ((G\rtimes \Aut(G))_{Q,\theta}, \NNN_G(Q),\theta^0)
\] in the notation of \cite[3.1]{SpIndCharTrip}.

If, in addition $ G\rtimes \Aut(G)_{\varphi}=G( G\rtimes \Aut(G))_{(Q,\theta)}$ then \cite[Proposition 3.7]{SpIndCharTrip} proves this property of character triples has an equivalent formulation in terms of projective representations.
Namely, there exists two modular projective representations $\mathcal{P}$ and $\mathcal{P}'$ of $G\rtimes \Aut(G)_{\varphi}$ and $(G\rtimes \Aut(G))_{Q,\theta}$ associated with $\varphi$ and $\theta$ in the sense of \cite[\S3]{SpIndCharTrip} such that 
\begin{itemize}
\item the factor sets of $\mathcal{P}$ and $\mathcal{P}'$ coincide on $(G\rtimes \Aut(G))_{Q,\theta}\times (G\rtimes \Aut(G))_{Q,\theta}$, and
\item for $x\in \Cent_{G\rtimes \Aut(G)_{\varphi}}(G)$ the matrices $\mathcal{P}(x)$ and $\mathcal{P}'(x)$ are associated with the same scalar.
\end{itemize}
\end{rem} 
 
 We now state our main result.
 
\begin{thm}\label{AWStCond}
Let $S$ be a finite non-abelian simple group and $p$ a prime dividing $|S|$. Let $G$ be a $p'$-covering group of $S$.
Assume we have a semi-direct product $\wt G \rtimes E$, such that the following conditions hold:

\begin{asslist}
\item 
\begin{itemize}[label={\textbullet}]
\item $G=[\wt G,\wt G]$,
\item $\Cent_{\wt G\rtimes E}(G)=\Zent(\wt G)$ and $\wt GE/\Zent(\wt G)\cong \Aut (G)$ by the natural map,
\item every $\vphi_0\in\IBr(G)$ extends to $\wt G_{\vphi_0}$,
\item for every $\ol{(Q,\theta_0)}\in\Alp(G)$ the character $\theta_0$ extends to  $(\NNN_{\wt G}(Q)/Q)_{\theta_0}$.
\end{itemize}

\item There exists some $\LinBr(\wt G/G)\rtimes E$-equivariant bijection 
$\wt \Omega: \IBr(\wt G)\longrightarrow \Alp(\wt G)$ such that 
\begin{enumerate}
	\item 	$\wt \Omega(\IBr(\wt G\mid \nu ^0))=\Alp(\wt G\mid \nu)$ for every $\nu\in\Lin(\ZZ(\wt G))_{p'}$, and
	\item 
	  $\rm J_{G}(\wt\varphi)=\rm J_{G}(\wt \Omega(\wt\varphi))$ for every $\wt\varphi\in\IBr(\wt G)$ (using the notation of Lemmas~\ref{lem216} and~\ref{lem29} above).
\end{enumerate}

\item For every $\wt{\varphi}\in \IBr(\wt G)$ there exists some $\varphi\in \IBr (G\mid \wt{\varphi})$ such that  
\begin{itemize}[label={\textbullet}]
\item $(\wt G\rtimes E)_{\varphi}=\wt G_{\varphi}\rtimes E_{\varphi}$, and 
\item $\varphi$ extends to $G\rtimes E_{\varphi}$.
\end{itemize}

\item In every $\wt G$-orbit on $\Alp(G)$ there exists a weight $\ol{(Q,\theta)}$ such that
\begin{itemize}[label={\textbullet}]
\item $(\wG E)_{\ol{(Q,\theta)}}=\wG_{\ol{(Q,\theta)}}E_{\ol{(Q,\theta)}}$, and
\item $\theta$ extends to $(GE)_{Q,\theta}/Q$.
\end{itemize} 
\end{asslist}
Then the inductive AW condition from \cite[Section 3]{NaTi_red} holds for $S$ and $p$.
\end{thm}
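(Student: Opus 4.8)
I would verify the two (equivalent) conditions of \Cref{IndAWCond}; write $X:=\wt G\rtimes E$. First note that condition~(i) forces $\wt G/G$ to be abelian (since $G=[\wt G,\wt G]$), so it says precisely that $G\lhd\wt G$ satisfies \Cref{Hyp:WeiBrExt} and the Clifford theory of Section~2 is available. By \Cref{WeightCorrYtoX} and its Brauer-character analogue (\Cref{lem216}(a) together with \cite[8.20]{NavBl}), the restriction map and the ``covered weights'' map (\Cref{covWeight}) give $X$-equivariant bijections
\[
\IBr(\wt G)/_{\sim\LinBr(\wt G/G)}\;\longleftrightarrow\;\IBr(G)/_{\sim_{\wt G}}
\qquad\text{and}\qquad
\Alp(\wt G)/_{\sim\LinBr(\wt G/G)}\;\longleftrightarrow\;\Alp(G)/_{\sim_{\wt G}}.
\]
Since $\wt\Omega$ is $\LinBr(\wt G/G)\rtimes E$-equivariant it descends to the left-hand quotients, so composing yields an $X$-equivariant bijection $\ol\Omega$ between the $\wt G$-orbits on $\IBr(G)$ and those on $\Alp(G)$; by condition~(ii)(a), and restricting central characters to $\Zent(G)\le\Zent(\wt G)=\Cent_X(G)$, this $\ol\Omega$ respects the partition according to $\IBr(\Zent(G))$.

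The second step is to refine $\ol\Omega$ to an $\Aut(G)$-equivariant bijection $\Omega\colon\IBr(G)\to\Alp(G)$. In each $\wt G$-orbit $\mathcal O$ on $\IBr(G)$ fix some $\wt\varphi\in\IBr(\wt G)$ over $\mathcal O$ and let $\varphi\in\IBr(G\mid\wt\varphi)$ (so $\varphi\in\mathcal O$) be a representative as in condition~(iii), so that $X_\varphi=\wt G_\varphi\rtimes E_\varphi$ and $\varphi$ extends to $G\rtimes E_\varphi$; in the orbit $\ol\Omega(\mathcal O)$ let $\olQtheta$ be a representative as in condition~(iv). Then $X_\varphi=X_{\olQtheta}$: as $\ol\Omega$ is $X$-equivariant the $X$-stabilisers of $\mathcal O$ and of $\ol\Omega(\mathcal O)$ agree, by~(iii) the former is $\wt G\rtimes E_\varphi$ and by~(iv) the latter is $\wt G\,E_{\olQtheta}$, whence $E_\varphi=E_{\olQtheta}$; moreover \Cref{lem216}(b) and \Cref{lem29}(b) present $\wt G_\varphi$ and $\wt G_{\olQtheta}$ through $\LinBr(\wt G/G)_{\wt\varphi}$ and ${\rm J}_G(\wt\varphi)$, respectively through $\LinBr(\wt G/G)_{\wt\Omega(\wt\varphi)}$ and ${\rm J}_G(\wt\Omega(\wt\varphi))$, which coincide because $\wt\Omega$ is $\LinBr(\wt G/G)$-equivariant and ${\rm J}_G(\wt\varphi)={\rm J}_G(\wt\Omega(\wt\varphi))$ by~(ii)(b). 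Defining $\Omega(\varphi^g):=\olQtheta^{\,g}$ for $g\in X$ is therefore unambiguous on the $X$-orbit of $\varphi$, and letting $\mathcal O$ run over a transversal gives a bijection $\Omega$ that is $X$-equivariant, hence $\Aut(G)$-equivariant (as $\Cent_X(G)=\Zent(\wt G)$ acts trivially on both sides and $X/\Zent(\wt G)\cong\Aut(G)$) and compatible with $\IBr(\Zent(G))$. This establishes \Cref{IndAWCond}(i).

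For \Cref{IndAWCond}(ii): by \Cref{CondiiiProjRep} that condition is equivalent to a relation of character triples and hence invariant under $\Aut(G)$, and every $\Aut(G)$-orbit on $\IBr(G)$ contains one of the representatives $\varphi$ above, so it suffices to treat such a $\varphi$ with $\olQtheta=\Omega(\varphi)$. From $X_\varphi=X_{\olQtheta}$ we get $\Aut(G)_\varphi=\Aut(G)_{\olQtheta}$, hence $G\rtimes\Aut(G)_\varphi=G\,(G\rtimes\Aut(G))_{Q,\theta}$, so \Cref{CondiiiProjRep} reduces the task to constructing modular projective representations $\mathcal P$ of $G\rtimes\Aut(G)_\varphi$ and $\mathcal P'$ of $(G\rtimes\Aut(G))_{Q,\theta}$ associated with $\varphi$ and with $\Omega(\varphi)^0$, with factor sets agreeing on $(G\rtimes\Aut(G))_{Q,\theta}\times(G\rtimes\Aut(G))_{Q,\theta}$ and equal scalars on $\Cent_{G\rtimes\Aut(G)_\varphi}(G)$. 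I would assemble $\mathcal P$ from the extension of $\varphi$ to $G\rtimes E_\varphi$ given by~(iii) and the extension of $\varphi$ to $\wt G_\varphi$ given by~(i), and $\mathcal P'$ from the extension of $\theta$ to $(GE)_{Q,\theta}/Q$ given by~(iv) and the extension of $\theta$ to $(\NNN_{\wt G}(Q)/Q)_\theta$ given by~(i); along the ``$E$-direction'' both are ordinary representations and contribute matching factor sets, while along the ``$\wt G$-direction'' the two are matched up exactly by \Cref{DZBijRDZ}, whose construction (through the reduction theorems of \cite{NaTi_red} and \cite{BAWSp}) encodes the Dade--Glauberman--Nagao correspondence and its cohomological properties from \cite{NavSpBrHtZero}, with the central scalars controlled by~(ii)(a). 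This yields \Cref{IndAWCond}(ii) and thereby the inductive Alperin weight condition for $S$ and $p$.

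I expect the main obstacle to be precisely this last matching of $\mathcal P$ and $\mathcal P'$ along the ``$\wt G$-direction'': extracting from the explicit construction in \Cref{DZBijRDZ} and from the properties of the Dade--Glauberman--Nagao correspondence a choice of projective representations that literally share a factor set on the overlap and agree on the centre. Once the Clifford-theoretic input of \Cref{lem216}, \Cref{lem29} and \Cref{WeightCorrYtoX} is granted, the remaining steps are organisational bookkeeping with stabilisers and central characters.
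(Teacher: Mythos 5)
Your construction of the $\Aut(G)$-equivariant bijection $\Omega$ is correct and essentially the paper's own argument: the same orbit/transversal bookkeeping via \Cref{covWeight}, \Cref{wtYfromXabquo}, \Cref{lem216} and \Cref{lem29}, with hypotheses (ii)(b), (iii) and (iv) yielding $(\wt G\rtimes E)_{\varphi}=(\wt G\rtimes E)_{\olQtheta}$.

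The genuine gap is in the verification of \Cref{IndAWCond}(ii), the very step you flag as the ``main obstacle''. The matching of the factor sets of $\calP$ and $\calP'$ along the $\wt G$-direction is not something extracted from \Cref{DZBijRDZ} or from cohomological properties of the DGN correspondence. In the paper it is the content of \Cref{CohomPairs} (proved as in \cite[Lem.~2.13]{SpIMDefChar}), and the decisive hypothesis there is \Cref{CohomPairs}(iv): for each $x\in E_\varphi$ there is a \emph{single} $\mu\in\IBr(\wt G/G)$ with $\wt\varphi^x=\wt\varphi\mu$ \emph{and} $\mu\cdot\olwtQwttheta=\olwtQwttheta^{\,x}$, together with $\IBr(\ZZ(\wt G)\mid\wt\varphi)=\IBr(\ZZ(\wt G)\mid\wt\theta^0)$. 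This joint compatibility is exactly what the $\LinBr(\wt G/G)\rtimes E$-equivariance of $\wt\Omega$ gives (apply $\wt\Omega$ to the relation $\wt\varphi^x=\wt\varphi\mu$ furnished by \Cref{lem216}(a)), and the central-character equality comes from (ii)(a); it is what forces the scalars by which the two projective representations fail to be ordinary in the mixed $E$--$\wt G$ directions to coincide. Your sketch never isolates this condition, and without it there is no reason the two factor sets agree on the overlap.

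A second, related omission: the extensions supplied by (i), (iii) and (iv) are characters of subgroups of $\wt G E$, so what the gluing construction produces are projective representations of $(\wt GE)_\varphi/Z$ and $(\wt GE)_{Q,\theta}/Z$, i.e. the relation
\[
\left( (\wt GE)_\varphi/Z,\,G/Z,\,\ol\varphi\right)\succ_{F,c} \left( (\wt GE)_{Q,\theta}/Z,\,\NNN_G(Q)/Z,\,\theta^0\right),
\]
not directly representations of $G\rtimes\Aut(G)_\varphi$ and $(G\rtimes\Aut(G))_{Q,\theta}$ as in \Cref{CondiiiProjRep}. Passing from the $\wt GE$-triples to the $G\rtimes\Aut(G)$-triples required by \Cref{IndAWCond}(ii) is done in the paper by the Butterfly Theorem \cite[2.16]{SpLausanne} for Brauer characters; this transfer step is absent from your argument, and assembling $\calP$, $\calP'$ ``directly'' on $G\rtimes\Aut(G)_\varphi$ would in effect require reproving it. With \Cref{CohomPairs} (hypothesis (iv) verified as above) and the Butterfly step added, your outline becomes the paper's proof.
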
 

\begin{lm}\label{CohomPairs}
Let $\varphi\in\IBr(G)$ and $\ol{(Q,\theta)}\in\Alp(G)$ such that the following holds:
\begin{enumerate}[(i)]
\item $(\wt G\rtimes E)_{\varphi}=\wt G_{\varphi}\rtimes E_{\varphi}$ and $\varphi$ extends to $G\rtimes E_\varphi$ and to $\wt G_\varphi$,
\item $(\wG E)_{\overline{(Q,\theta)}}=\wG_{\overline{(Q,\theta)}} E_{\overline{(Q,\theta)}}$ and $\theta$ extends to $(G\rtimes E)_{Q,\theta}/Q$ and to $\NNN_{\wt G}(Q)_\theta/Q$,
\item $(\wG E)_\varphi=(\wG E)_{\ol{(Q,\theta)}}$, and
\item there exist $\wt \varphi\in\IBr(\wG \mid \varphi)$ and $\olwtQwttheta\in\Alp(\wG\mid \olQtheta)$ such that 
  there exists for every $x\in E_{\varphi}$ some $\mu\in\IBr(\wt G/G)$ with 
  \[ \mu\cdot\olwtQwttheta=\olwtQwttheta ^x \text{ and  } \wt \varphi^x=\wt\varphi\mu, \]
and $\IBr( {\ZZ(\wt G)}\mid \wt \varphi)= \IBr({\ZZ(\wt G)}\mid \wt \theta^0)$.
\end{enumerate}
Let $Z:=\ker(\varphi)\cap \ZZ(G)$. Then
\[
\left( (\wt GE)_\varphi/Z,G/Z,\ol\varphi\right)\succ_{F,c} \left( (\wt GE)_{Q,\theta}/Z,\NNN_G(Q)/Z,\theta^0\right).
\]
\begin{proof}
As in the proof of \cite[Lem.~ 2.13]{SpIMDefChar} the assumptions allow the  construction of modular projective representations $\calP$ and $\calP'$ of $(\wt G/Z E)_\varphi$ and $(\wt G/Z E)_{Q,\theta}$ associated with the Brauer characters $\vphi $ and $\theta^0$ whose factor sets agree on $(\wt G/Z E)_{Q,\theta}\times (\wt G/Z E)_{Q,\theta}$ and such that for every $x\in\Cent_{(\wt G/Z  E)_\varphi}(G/Z)$ the matrices $\calP(x)$ and $\calP'(x)$ are scalar matrices associated with the same scalar. 
\end{proof}
\end{lm}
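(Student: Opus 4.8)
The plan is to follow the template of \cite[Lem.~2.13]{SpIMDefChar}, reducing the claim to the construction of two compatible modular projective representations and then building them from the extendibility data in the hypotheses. I first check that we are in the configuration in which the relation $\succ_{F,c}$ is equivalent to such a construction. Since $\ol{(Q,\theta)}$ is by definition the $G$-orbit of the weight, orbit--stabilizer gives $(\wt GE)_{\ol{(Q,\theta)}}=G\,(\wt GE)_{Q,\theta}$; combined with~(iii) this yields the factorization $(\wt GE)_\varphi=G\,(\wt GE)_{Q,\theta}$, and moreover $(\wt GE)_{Q,\theta}\cap G=\NNN_G(Q)_\theta=\NNN_G(Q)$ because $\theta$ is automatically $\NNN_G(Q)$-invariant. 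Working modulo $Z=\ker(\varphi)\cap\ZZ(G)$, a common central subgroup lying in the kernel of $\ol\varphi$ and, thanks to the central compatibility in~(iv), also in the kernel of the inflation of $\theta^0$, \cite[Prop.~3.7]{SpIndCharTrip} (compare \Cref{CondiiiProjRep}) reduces the statement to exhibiting modular projective representations $\calP$ of $(\wt GE)_\varphi/Z$ associated with $\ol\varphi$ and $\calP'$ of $(\wt GE)_{Q,\theta}/Z$ associated with $\theta^0$ whose factor sets agree on $(\wt GE)_{Q,\theta}/Z\times(\wt GE)_{Q,\theta}/Z$ and such that $\calP(x)$ and $\calP'(x)$ are scalar matrices affording one and the same scalar for every $x\in\Cent_{(\wt GE)_\varphi/Z}(G/Z)$.

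For $\calP$: by~(i), $(\wt GE)_\varphi=\wt G_\varphi\rtimes E_\varphi$, and $\varphi$ is afforded by a representation $\calD$ of $\wt G_\varphi/Z$ extending $\ol\varphi$. For $x\in E_\varphi$ the conjugate ${}^x\calD$ again extends $\ol\varphi$ (as $x$ normalizes $\wt G_\varphi$ and fixes $\varphi$), so one chooses an invertible matrix $\calP(x)$ conjugating $\calD$ into ${}^x\calD$ and sets $\calP(gx):=\calD(g)\calP(x)$; this is a projective representation whose factor set is inflated from a $2$-cocycle $\beta$ on $E_\varphi$, and the matrices $\calP(x)$ are normalized by means of the extension of $\varphi$ to $G\rtimes E_\varphi$ provided by~(i). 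The role of \Cref{lem216} is to identify $\wt G_\varphi$ as the intersection of the kernels of the linear Brauer characters fixing $\wt\varphi$, so that, after this normalization, $\beta$ is governed by the way $E_\varphi$ permutes $\IBr(\wt G_\varphi\mid\varphi)$, i.e.\ by the elements $\mu\in\LinBr(\wt G/G)$ with $\wt\varphi^{\,x}=\wt\varphi\mu$. Symmetrically, using the two extensions of $\theta$ in~(ii), the factorization $(\wt GE)_{\ol{(Q,\theta)}}=\wt G_{\ol{(Q,\theta)}}E_{\ol{(Q,\theta)}}$, and \Cref{lem29} in place of \Cref{lem216}, I construct $\calP'$ on $(\wt GE)_{Q,\theta}/Z$ associated with $\theta^0$ (taking the reduction modulo $p$ of an ordinary projective representation attached to the defect-zero character $\theta$), with factor set $\beta'$ inflated from the image of $(\wt GE)_{Q,\theta}$ in $E$ and governed by the elements $\mu$ with $\mu\cdot\olwtQwttheta=\olwtQwttheta^{\,x}$.

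The compatibility is now imposed by~(iii) and~(iv). By~(iii), $E_\varphi=E_{\ol{(Q,\theta)}}$, so $\beta$ lives on $E_\varphi$ and $\beta'$ on a subgroup of $E_\varphi$; and~(iv) supplies, for each $x\in E_\varphi$, a \emph{single} $\mu\in\LinBr(\wt G/G)$ realizing simultaneously $\wt\varphi^{\,x}=\wt\varphi\mu$ and $\mu\cdot\olwtQwttheta=\olwtQwttheta^{\,x}$. Feeding this common $\mu$ into the two normalizations forces $\beta$ and $\beta'$ to coincide on the overlap, hence the factor sets of $\calP$ and $\calP'$ agree on $(\wt GE)_{Q,\theta}/Z\times(\wt GE)_{Q,\theta}/Z$. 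For the scalar condition, if $x\in\Cent_{(\wt GE)_\varphi/Z}(G/Z)$ then conjugation by $x$ is trivial on $G/Z$, so $\calP(x)$ commutes with the irreducible $\calD|_{G/Z}$ and is scalar, and likewise $\calP'(x)$ is scalar (as $x$ also centralizes $\NNN_G(Q)/Z$). Multiplying $x$ by a suitable element of $G/Z$ changes neither scalar and reduces to $x$ in $\ZZ(\wt G)/Z$, where the two scalars are the values of the central characters of $\wt\varphi$ and $\wt\theta^0$ on $\ZZ(\wt G)$; these coincide by the last clause of~(iv), $\IBr(\ZZ(\wt G)\mid\wt\varphi)=\IBr(\ZZ(\wt G)\mid\wt\theta^0)$.

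The crux is the simultaneous normalization that makes $\beta$ and $\beta'$ literally equal, not merely cohomologous: the extensions of $\varphi$ (to $\wt G_\varphi$ and to $G\rtimes E_\varphi$), of $\theta$ (to $\NNN_{\wt G}(Q)_\theta/Q$ and to $(G\rtimes E)_{Q,\theta}/Q$), and the matrices $\calP(x)$, $\calP'(x)$ must be chosen coherently, while keeping track of the difference between the $G$-orbit $\ol{(Q,\theta)}$ and the weight $(Q,\theta)$, of the passage between ordinary and $p$-Brauer data for $\theta$, and of the translation furnished by \Cref{lem216} and \Cref{lem29}. This is precisely the technical content already carried out in \cite[Prop.~2.12 and Lem.~2.13]{SpIMDefChar}, and hypothesis~(iv) is designed so that this coherence is attainable.
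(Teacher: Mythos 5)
Your proposal is correct and follows essentially the same route as the paper, which simply invokes the construction of \cite[Lem.~2.13]{SpIMDefChar}: build $\calP$ and $\calP'$ from the extendibility data in (i)--(ii), use the common $\mu$ in (iv) together with (iii) to make the factor sets agree on $(\wt GE)_{Q,\theta}/Z$, and use $\IBr(\ZZ(\wt G)\mid\wt\varphi)=\IBr(\ZZ(\wt G)\mid\wt\theta^0)$ for the scalar condition, then conclude via the equivalence of \Cref{CondiiiProjRep}. Your write-up is in fact more detailed than the paper's one-sentence proof.
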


\begin{proof}[Proof of \Cref{AWStCond}]
Let $\mathcal{A}$ be an $\LinBr(\wt G/G)\rtimes E$-transversal in ${\Alp}(\wt G)$. 
By the assumption (i) and \Cref{lem29}, each $\LinBr(\wG/G)$-orbit in $\Alp(\wG)$ coincides with a set $\Alp(\wG\mid \ol{(Q,\theta)})$ for some $\ol{(Q,\theta)}\in\Alp(G)$.
For each weight $\ol {(\wt Q, \wt\theta)}\in\calA$ we find according to \Cref{covWeight} a weight $\ol{(Q,\theta)}$ of $G$ that is covered by it and by assumption (iv) we can choose it such that $(\wG E)_{Q,\theta}=\wG_{Q,\theta}(GE)_{Q,\theta}$ and $\theta$ extends to $(GE)_{Q,\theta}$. Let $\calA_0$ be the set of $\ol{(Q,\theta)}$ constructed this way, which forms a $\wG\rtimes E$-transversal in $\Alp(G)$, see \Cref{rem28} and \Cref{lem29}.

Since $\wt \Omega$ is $\LinBr(\wG/G)\rtimes E$-equivariant by (ii) the set  $\mathcal{G}:=\wt \Omega^{-1}(\calA)$ is a $\LinBr(\wG/G)\rtimes E$-transversal in $\IBr(\wG)$. 
For each $\wt\varphi\in \mathcal{G}$ we fix some $\varphi_0\in \IBr(G\mid \wt \varphi)$ satisfying Condition (iv) of the theorem and let $\mathcal{G}_0$ denote the set of such $\varphi_0$. By standard Clifford theory $\calG_0$ is a $\wG\rtimes E$-transversal in $\IBr(G)$.

Fix $\wt\varphi\in\calG$. Let $\ol{(\wt Q,\wt\theta)}:=\wt \Omega(\wt\varphi)\in\calA$,  
$\varphi_0\in\calG_0$ be  covered by $\wt\varphi$ and
$\ol{(Q,\theta)}\in\calA_0$ be covered by $\ol{(\wt Q,\wt\theta)}\in \calA$. Then $\varphi_0$ and $\ol{(Q,\theta)}$ are uniquely defined. 
We set $\Omega(\varphi_0)=\ol{(Q,\theta)}$.

Then according to \Cref{lem216},  $\wt G_\varphi=\left (\bigcap_{\mu\in \LinBr(\wG/G)_{\wt\varphi}} \ker(\mu)\right ) \cap \rm J_G(\wt \varphi)$. 
Analogously $\wG_{\ol{(Q,\theta)}}=\left (\bigcap_{\mu\in \LinBr(\wt N/N)_{\ol{(\wt Q,\wt \theta)}}} \ker(\mu)\right )\cap \rm J_{G}(\ol{\wt Q,\wt \theta})$, see \Cref{lem29}. By using the $\LinBr(\wt G/G)$-equivariance of $\wt \Omega$ and the assumption $\rm J_{G}(\wt\varphi)= \rm J_{G}(\ol{(\wt Q, \wt\theta )})$ we see  $\wG_{\varphi_0}=\wG_{\ol{(Q,\theta)}}$. 

Recall $\varphi_0\in\IBr(G)$ with $(\wG E)_{\varphi_0}=\wG_{\varphi_0} E_{\varphi_0}$.
This means that the $\wG$-orbit and the $E$-orbit of $\varphi_0$ intersect only at $\vphi_0$, thus implying easily  \[E_{\varphi_0}= E_{\wG\text{-orbit}(\varphi_0)}=
E_{\IBr(\wG\mid \varphi_0)}.\]
Similarly for $\ol{(Q,\theta)}\in\Alp(G)$ we see \[E_{\ol{(Q,\theta)}}= E_{\wG\text{-orbit}(\ol{(Q,\theta)})}=
E_{\Alp(\wG\mid \ol{(Q,\theta)})}\] according to \Cref{rem28}. Then $\wt \Omega$ maps  $\IBr(\wG\mid \varphi_0)$, the $\LinBr(\wG/G)$-orbit containing $\wt\varphi$, to  $\Alp(\wG\mid \ol{(Q,\theta)})$, the $\LinBr(\wG/G)$-orbit containing $\wt\Omega(\wt\varphi)$. Since $\wt \Omega$ is $E$-equivariant this implies \[E_{\varphi_0}=E_{\IBr(\wG\mid \varphi_0)}=
E_{\Alp(\wG\mid \ol{(Q,\theta)})}=E_{\ol{(Q,\theta)}}.
\]

This proves  
\[ (\wt G\rtimes E)_{\varphi_0}=(\wt G\rtimes E)_{\ol{(Q,\theta)}}\] 
and hence there exists a unique $\wG\rtimes E$-equivariant bijection  $\Omega:\IBr(G)\rightarrow \Alp(G)$ with the given values on $\calG_0$. Note that this map is indeed a bijection thanks to \Cref{wtYfromXabquo} and \Cref{covWeight}. The bijection $\Omega$ satisfies the condition in 3.1(ii), by giving ordered character triples according to \Cref{CohomPairs}, which in turn via  \Cref{CondiiiProjRep} implies 3.1(ii) according to the Butterfly Theorem \cite[2.16]{SpLausanne} for Brauer characters.
\end{proof}


\section{The inductive BAW condition}
\label{RefomIndBAW}

\subsection{The inductive BAW condition for a set of blocks} We now turn to the version of Alperin's conjecture with blocks, see Conjecture~1.2 above. All blocks are $p$-blocks for a chosen prime $p$.
Definition 3.2 of \cite{KosSpAMBAWCy} provides an inductive BAW condition related to a single block. 

\begin{notation}
For a block $B\in\Bl_p(G)$, $\Alp(B)$ denotes the set of weights $\ol{(Q,\theta)}\in\Alp(G)$ such that $\bl (\theta')^G=B$ for $\theta'\in\Irr(\NNN_G(Q))$ the lift of $\theta$.
Note that this is well defined as $\bl(\theta'^g)^G=\bl(\theta')^G$ for all $g\in G$. We extend the notation to sets $\calB$ of blocks of $G$ by $\Alp(\calB)=\cup_{B\in\calB}\Alp(B)$.
\end{notation}

\begin{rem}\label{CovBlCovWei}
Let $( \wt Q,\wt\theta)$ be a weight of $\wt G$  and $(Q,\theta)$ a weight of $G$ covered by $(\wt Q,\wt\theta)$.
Set $\wt b$ to be the block of $\NNN_{\wt G}(\wt Q)$ which dominates $\bl(\wt\theta)$, and $b$ the block of $\NNN_G(Q)$ which dominates $\bl(\theta)$.
Then by applying \cite[Thm B]{KosSpCliffTh} with the results in \cite[Section 5]{NavSpBrHtZero} it follows that $\wt b^{\wt G}$ covers $b^G$.
\end{rem}

\begin{df}\label{IndBAWCond}
Let $p$ be a prime, $S$ a finite non-abelian simple group, $G$ a $p'$-covering group of $S$ and $B\in \Bl (G)$ such that $Z(G)\cap \ker (\vphi)$ is trivial for any $\vphi\in {\rm IBr}(B)$.
We say that {\bf the inductive BAW condition holds for $B$} if the following statements are satisfied: 
\begin{asslist}
\item There exists an $\Aut(G)_B$-equivariant bijection $$\Omega_B:\IBr(B)\rightarrow \Alp(B).$$ 
\item For every $\varphi\in \IBr(B)$ and $\ol{(Q,\theta)}=\Omega_B(\varphi)$, there exist a finite group $A:=A(\varphi,Q)$ and characters $\wt\varphi\in\IBr(A)$ and $\wt\theta\in\IBr(\NNN_A(Q))$ such that
\begin{enumerate}
\item The group $A$ satisfies $G\lhd A$, $A/{\rm C}_A(G)\cong \Aut(G)_\varphi$, ${\rm C}_A(G)=Z(A)$ and $p\nmid |Z(A)|$,
\item $\wt\varphi\in\IBr(A)$ is an extension of $\varphi$,
\item $\wt\theta\in\IBr(\NNN_A(\ol Q))$ is an extension of $\Omega_B(\varphi)^0$,
\item for every $J$ with $G\leq J\leq A$ the characters $\wt\varphi$ and $\wt\theta$ satisfy
\[\bl(\wt\varphi_J)=\bl (\wt\theta_{\NNN_J(\ol Q)})^J.\]
\end{enumerate}
\end{asslist}
\end{df}

\begin{rem}
As with the inductive Alperin weight condition, an alternative formulation in terms of character triples and then also with projective representations was provided in \cite{SpIndCharTrip}.
In particular, (ii) in \Cref{IndBAWCond} will be verified by showing the following equivalent property
	\[
	\left( ( G\rtimes \Aut(G))_\varphi,G,\varphi\right)\succ_{F,b} \left( ( G \rtimes \Aut(G))_{Q,\theta},\NNN_G(Q),\theta^0\right),
	\]
	see also Theorem~4.4 of \cite{SpIndCharTrip}. 
The order relation on character triples given by ``$ \succ_{F,b}$"  arises from ``$\succ_{F,c}$" with an additional requirement related to \Cref{IndBAWCond}(ii)(4), see Definition~3.2 of \cite{SpIndCharTrip}.
This relation holds if the projective representations $\calP$ and $\calP'$ from \Cref{CondiiiProjRep} can be chosen to satisfy additionally the condition described in Remark~4.5 of \cite{SpLausanne}.

For the verification one can also consider every block $\wh B$ of the universal $p'$-covering group $\wh G$ of $S$. 
Let $\wh B\in\Bl_p(\wh G)$ and  $Z:=\Zent(\wh G)\cap\ker \varphi$ for $\vphi\in\IBr(\wh B)$. 
Then the block $B$ of $G:=\wh G/Z$ contained in $\wh B$ satisfies the inductive BAW condition, if 
\begin{enumerate}
\item there exists an $\Aut(\wh G)_B$-equivariant bijection $\wh \Omega_{\wh B}:\IBr(\wh B)\rightarrow \Alp(\wh B)$, 
\item for every $\wh \phi\in\IBr(\wh B)$, and $\overline{(\wh Q,\wh \theta)}=\wh\Omega_{\wh B}(\phi)$  the associated characters $\phi\in\IBr(\wh G/Z)$ and $\theta \in \Irr(\NNN_{\wh G/Z}(\wh Q) ) $ satisfy $$
	\left( ( G\rtimes \Aut(G))_\varphi,G,\varphi\right)
	\succ_{F,b} \left( ( G \rtimes \Aut(G))_{\wh Q,\theta},\NNN_G(\wh Q),\theta^0\right)$$
	or equivalently  
	$$
		\left( ( \wh G\rtimes \Aut(\wh G))_{\wh \varphi},\wh G,\wh \varphi\right)
		\succ_{F,b} \left( ( \wh G \rtimes \Aut(\wh G))_{\wh Q,\wh \theta},\NNN_{\wh G}(\wh Q),\wh \theta^0\right).$$
		The equivalence of the two relations comes from the fact that $Z$ is a $p'$-group and (the proof of) Corollary~4.5 of \cite{SpDade}.
\end{enumerate}

\end{rem}

\subsection{Criterion for the inductive BAW condition} 
In this section we give a criterion for the inductive BAW condition adapted to simple groups of Lie type. 
It is clear that the last assumption on the orbit stabiliser will not hold for all blocks.
It mimics the criterion provided for the inductive Alperin--McKay condition in \cite{BrSpAMTypeA}.

\begin{thm}\label{NewIndBAWCond}
Let $S$ be a finite non-abelian simple group and $p$ a prime dividing $|S|$.
Let $G$ be a $p'$-covering group of $S$ and $\calB\subseteq \Bl (G)$ a $\wt G$-invariant subset with $(\wt GE)_B\leq (\wt GE)_\calB$ for all $B\in \calB$.
Assume we have a semi-direct product $\wt G \rtimes E$, such that the following conditions hold:
\begin{asslist}
\item 
\begin{itemize}[label={\textbullet}]
\item $G=[\wt G,\wt G]$ and $E$ is abelian or isomorphic to the direct product of a cyclic group and $\mathfrak{S}_3$,
\item $\Cent_{\wt G\rtimes E}(G)=\Zent(\wt G)$ and $\wt GE/\Zent(\wt G)\cong \Aut (G)_{\calB}$ by the natural map,
\item every $\vphi_0\in \IBr(\calB)$ extends to $\wt G_{\vphi_0}$,
\item for every $\overline{(Q,\theta_0)}\in \Alp(\calB)$ the characters $\theta_0$ extends to  $(\NNN_{\wt G}(Q)/Q)_{\theta_0}$.
\end{itemize}

\item Let $\wt \calB= \Bl_p(\wt G\mid \calB)$.
There exists a $\LinBr(\wG/G)\rtimes E$-equivariant bijection
\[\wt \Omega: \IBr(\wt \calB)\longrightarrow \Alp(\wt \calB) \]
with  $\wt \Omega(\wt B)=\Alp(\wt B)$ for all $\wt B\in \wt \calB$ and $\rm J_{G}(\wt\varphi)=\rm J_{G}(\wt \Omega(\wt\varphi))$ for every $\wt\varphi\in \IBr(\calB)$.

\item For every $\wt{\varphi}\in \IBr(\wt \calB)$ there exists some $\varphi\in \IBr (G\mid \wt{\varphi})$ such that
\begin{itemize}[label={\textbullet}]
\item $(\wt G\rtimes E)_{\varphi}=\wt G_{\varphi}\rtimes E_{\varphi}$, and 
\item every $\wG$-conjugate of $\varphi$ extends to its stabilizer in $G\rtimes E$.
\end{itemize}
\item In every $\wt G$-orbit on $\Alp(\calB)$ there exists a weight $(Q,\theta)$ such that 
\begin{itemize}[label={\textbullet}]
\item $(\wG E)_{\ol{(Q,\theta)}}=\wG_{\ol{(Q,\theta)}} E_{\ol{(Q,\theta)}}$, and
\item $\theta$ extends to $(GE)_{Q,\theta}/Q$.
\end{itemize}
\end{asslist}
Then the inductive BAW condition holds for all $p$-blocks $B_0\in \calB$ with abelian $\Out (G)_{\wG\text{-orbit of } B_0}$.
\end{thm}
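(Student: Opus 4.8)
The plan is to rerun the proof of \Cref{AWStCond} inside the block sets $\calB$ and $\wt\calB:=\Bl_p(\wt G\mid\calB)$, keeping track of $p$-blocks at every step, and then to replace the appeal to the Butterfly Theorem by its blockwise refinement.

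First I would record the block bookkeeping that makes the earlier Clifford theory applicable. Since $\calB$ is $\wt G$-invariant and the blocks of $G$ covered by a fixed block of $\wt G$ form a single $\wt G$-orbit, every block of $G$ lying under a block in $\wt\calB$ already lies in $\calB$. Hence for $\wt\varphi\in\IBr(\wt\calB)$ one has $\IBr(G\mid\wt\varphi)\subseteq\IBr(\calB)$, and by \Cref{CovBlCovWei} every weight of $G$ covered by a weight in $\Alp(\wt\calB)$ lies in $\Alp(\calB)$. Moreover $\wt GE$ stabilises $\calB$ (it maps onto $\Aut(G)_\calB$), so $\IBr(\calB)$ and $\Alp(\calB)$ are $\wt G\rtimes E$-stable. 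Thus \Cref{Hyp:WeiBrExt} holds for $G\lhd\wt G$ with all data restricted to $\calB$, and \Cref{lem216,lem29,covWeight,wtYfromXabquo}, together with assumptions (i)--(iv), go through exactly as in \Cref{AWStCond}. Running that construction produces a $\wt G\rtimes E$-equivariant bijection $\Omega:\IBr(\calB)\to\Alp(\calB)$ with $(\wt G\rtimes E)_{\varphi_0}=(\wt G\rtimes E)_{\Omega(\varphi_0)}$ for the chosen transversal representatives $\varphi_0$, giving ordered character triples $\succ_{F,c}$ via \Cref{CohomPairs}.

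The first genuinely new step is to arrange that $\Omega$ is \emph{block-compatible}, i.e.\ restricts to bijections $\Omega_B:\IBr(B)\to\Alp(B)$ for every $B\in\calB$. By $\wt G\rtimes E$-equivariance it is enough to achieve this on transversal representatives, so I would choose, for each $\wt\varphi$ in the transversal of $\IBr(\wt\calB)$, the character $\varphi_0\in\IBr(G\mid\wt\varphi)$ of assumption (iii) and the weight $\ol{(Q,\theta)}$ covered by $\wt\Omega(\wt\varphi)$ of assumption (iv) so that $\bl(\varphi_0)$ equals the block of $G$ lying under $\ol{(Q,\theta)}$. Feasibility of this choice uses: the block-compatibility $\wt\Omega(\wt B)=\Alp(\wt B)$; that $\wt\Omega$ carries the $\LinBr(\wt G/G)$-orbit $\IBr(\wt G\mid\varphi_0)$ onto the $\LinBr(\wt G/G)$-orbit $\Alp(\wt G\mid\ol{(Q,\theta)})$ (by \Cref{lem216,lem29} and assumption (ii)); that both $\IBr(G\mid\wt\varphi)$ and the set of $G$-weights covered by $\wt\Omega(\wt\varphi)$ meet each block of $G$ under $\bl(\wt\varphi)$ equally often (by \Cref{covWeight} and \Cref{CovBlCovWei}); and the block-induction clause of \Cref{DZBijRDZ} (via \cite[Cor.~2.11]{BAWSp}), which controls which block of $G$ sits under the DGN-image $\ol\pi_{\wt Q/Q}(\theta)$. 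This is the weight analogue of the bookkeeping carried out in \cite{BrSpAMTypeA} for the Alperin--McKay setting, and I expect it to be the most delicate part; the hypothesis $(\wt GE)_B\le(\wt GE)_\calB$ is then what guarantees that the resulting $\Omega_B$ is $\Aut(G)_B$-equivariant, as required in \Cref{IndBAWCond}(i).

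Finally I would upgrade the character-triple relation from $\succ_{F,c}$ to $\succ_{F,b}$ and pass to an individual block. The modular projective representations $\calP,\calP'$ produced in (the proof of) \Cref{CohomPairs} for $\varphi$ and $\theta^0$ can, thanks again to the block-induction clause of \Cref{DZBijRDZ} and \Cref{CovBlCovWei}, be chosen so as to satisfy in addition the condition of \cite[Rem.~4.5]{SpLausanne} (compatibility with block induction as in \Cref{IndBAWCond}(ii)(4)), which is exactly what turns $\succ_{F,c}$ into $\succ_{F,b}$. Now fix a block $B_0\in\calB$ with $\Out(G)_{\wt G\text{-orbit of }B_0}$ abelian and restrict the entire picture to the $\wt G$-orbit of $B_0$: the stabiliser of that orbit in $\wt GE$ has abelian image in $\Out(G)$, so even when $E$ involves an $\mathfrak{S}_3$-factor (as happens for $\mathrm D_4(q)$) the relevant relative outer automorphism group stays abelian. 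Then the blockwise Butterfly Theorem for Brauer characters (\cite[2.16]{SpLausanne}; see also \cite{BrSpAMTypeA}) applies and, combined with the block-compatible $\Aut(G)_{B_0}$-equivariant bijection $\Omega_{B_0}$ and \Cref{CondiiiProjRep}, yields the inductive BAW condition of \cite{KosSpAMBAWCy} for $B_0$, as desired.
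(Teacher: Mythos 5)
Your overall plan (rerun the argument of \Cref{AWStCond} inside $\calB$ and $\wt\calB$, force block-compatibility, then upgrade the triple relation) matches the paper's outline, but the proposal has a genuine gap exactly at the step that carries the real content: the passage from $\succ_{F,c}$ to $\succ_{F,b}$. You assert that the projective representations from \Cref{CohomPairs} ``can be chosen'' to satisfy the extra block condition of Remark~4.5 of \cite{SpLausanne} ``thanks to the block-induction clause of \Cref{DZBijRDZ} and \Cref{CovBlCovWei}'', and that the Butterfly Theorem then finishes the job. Neither claim is justified, and neither is how the paper argues. The relation $\succ_{F,b}$ demands block-induction compatibility for \emph{every} intermediate group between $G$ and $(\wt GE)_\varphi$; matching $\bl(\varphi_0)$ with the block of $G$ below the chosen weight (your ``block bookkeeping'' step) only handles the bottom layer, and the Butterfly Theorem merely transports an already established relation between ambient groups with the same image in $\Out(G)$ --- it does not create the blockwise strengthening. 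The paper's actual mechanism is \Cref{CohomPairs2}: one first uses $\wt\Omega(\wt B)=\Alp(\wt B)$ to get $\bl(\wt\varphi)=\bl(\wt\theta)^{\wt G}$ (which, incidentally, is also what replaces the missing central-character hypothesis (ii)(a) of \Cref{AWStCond}); then one produces, by Clifford theory for the abelian extension $\wt G/G$, a \emph{specific} $\wt G$-conjugate $\varphi'$ of $\varphi$ together with extensions $\wh\varphi'$ to $\wt G_\varphi$ and $\wh\theta$ to $\NNN_{\wt G}(Q)_\theta/Q$ satisfying $(\wh\varphi')^{\wt G}=\wt\varphi$ and $\bl(\wh\varphi')=\bl(\wh\theta)^{\wt G_\varphi}$, giving $\succ_{F,b}$ over $\wt G_\varphi$; and finally one extends this blockwise relation from $\wt G_\varphi$ to $(\wt GE)_\varphi$ by \cite[Lem.~3.2]{CabSpAMTypeA} when $E$ is abelian, respectively \cite[Lem.~6.13]{Ruh_Jordan_decomp} when $E\cong C\times\mathfrak S_3$. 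This is precisely where the structural hypothesis on $E$ enters; your sketch never uses it for this purpose (you invoke the $\mathfrak S_3$ case only to argue the relative outer automorphism group is abelian, which is a different point).

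Two further consequences of this gap: first, because the blockwise relation is obtained only for \emph{some} $\wt G$-conjugate $\varphi'$ (not for an arbitrarily prescribed one), the transversal $\calG_0$ must be built from the conjugates singled out by \Cref{CohomPairs2}; this is exactly why hypothesis (iii) of \Cref{NewIndBAWCond} is strengthened over \Cref{AWStCond}(iii) to demand that \emph{every} $\wt G$-conjugate of $\varphi$ extends to its stabilizer in $G\rtimes E$, a point your proposal does not exploit (and your ``equally often'' counting claim is both unproven and unnecessary --- transitivity of $\wt G$ on the blocks under $\bl(\wt\varphi)$ suffices, but even so bottom-level block matching is not enough, as explained above). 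Second, once each transversal pair is related by $\succ_{F,b}$, block-compatibility of $\Omega$ (i.e.\ $\Omega(\IBr(B))=\Alp(B)$) is automatic from the $J=G$ case of the block condition together with bijectivity and equivariance, so your separate ``arrange block-compatibility first'' step is not where the difficulty lies; the difficulty is the construction of the $\succ_{F,b}$-related triples themselves, which your sketch leaves unproved.
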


To prove this we use the following variant of \Cref{CohomPairs} where we keep the group-theoretic assumption of the above statement.
\begin{lm}\label{CohomPairs2}
	Assume that $G$, $\wt G$ and $E$ satisfy the group-theoretic assumptions made in \Cref{NewIndBAWCond}. Let $\varphi\in\IBr(G)$ and $\ol{(Q,\theta)}\in\Alp(G)$ such that the following holds:
	\begin{enumerate}[(i)]
		\item $(\wt G\rtimes E)_{\varphi}=\wt G_{\varphi}\rtimes E_{\varphi}$, 
		$[(\wt G\rtimes E)_{\varphi}, \wG]\leq \wG_\varphi$,
		 and all $\wG$-conjugates of $\varphi$ extend in $G\rtimes E_\varphi$ and $\wt G_\varphi$.
		\item $(\wG E)_{\ol{(Q,\theta)}}=\wG_{\ol{(Q,\theta)}}E_{\ol{(Q,\theta)}}$ and $\theta$ extends to $(G\rtimes E)_{Q,\theta}/Q$ and to $\NNN_{\wt G}(Q)_\theta/Q$.
		\item $(\wG E)_\varphi=(\wG E)_{\overline{(Q,\theta)}}$.
		\item There exist $\wt \varphi\in\IBr(\wG \mid \varphi)$ and $\olwtQwttheta\in\Alp(\wG\mid \overline{(Q,\theta)})$ such that for every $x\in E_{\varphi}$  there exists some $\mu\in\IBr(\wt G/G)$ with $\mu\cdot\olwtQwttheta=\olwtQwttheta ^x$,  $\wt \varphi^x=\wt\varphi\mu$ and $\bl(\wt \varphi)=\bl(\wt \theta)^{\wt G}$.
	\end{enumerate}
	Let $Z:=\ker(\varphi)\cap \ZZ(G)$. Then
	\[
	\left( (\wt GE)_\varphi/Z,G/Z,\ol\varphi'\right)\succ_{F,b} \left( (\wt GE)_{Q,\theta}/Z,\NNN_G(Q)/Z,\theta^0\right),
	\]
	for some $\wt G$-conjugate $\varphi'$ of $\varphi$. 
	\begin{proof}
Note that as $\bl(\wt \varphi)=\bl(\wt\theta)^{\wt G}$, it follows that $\wt \varphi$ and $\wt \theta$ lie above the same central character.
Because of $[(\wt G\rtimes E)_{\varphi}, \wG]\leq \wG_\varphi$ all $\wG$-conjugates have $(\wG\times E)_\varphi$ as their stabilizer in $\wG\rtimes E$. Thus by \Cref{CohomPairs}, it follows that 	
	\[
	\left( (\wt GE)_\varphi/Z,G/Z,\ol\varphi'\right)\succ_{F,c} \left( (\wt GE)_{Q,\theta}/Z,\NNN_G(Q)/Z,\theta^0 \right)
	\]
	for all $\wG$-conjugates $\varphi'$ of $\varphi$. (Here $\overline{ \varphi'}$ denotes the character of $G/Z$ associated to $\varphi'$.) 
	
	Let $(\wt Q, \wt \theta)$ a representative of the weight $\overline{(\wt Q,\wt \theta)}$ and $\wh \theta$ be an extension of $\theta$ to $\NNN_\wG(Q)_\theta/Q$ such that via induction and a map $\Delta_\theta$ it corresponds to $(\wt Q, \wt \theta)$. Let $\varphi'$ be some $\wG$-conjugate of $\varphi$ such that some extension $\wh\varphi'$ of $\varphi'$ to $\wG_{\varphi}$ satisfies $(\wh\varphi')^\wG=\wt\varphi$ and $\bl(\wh\varphi)=\bl(\wh\theta)^{\wG_{\varphi}}$, where $\wh \theta$ is an extension of $\theta$ to $\NNN_\wG(Q)_\theta/Q$. The existence of $\wh \theta$ and $\wh \varphi'$ follows from Clifford theory and the facts that $\wt G/G$ is abelian and $\theta$ extends to $\NNN_{\wt G}(Q)/Q$ by (ii).
	
	This implies
	\[
		\left( \wt G_\varphi/Z,G/Z,\ol\varphi'\right)\succ_{F,b} \left( \wt G_{Q,\theta}/Z,\NNN_G(Q)/Z,\theta^0 \right)
		\]
	for this particular character $\varphi'$. 
	According to \cite[Lem.~3.2]{CabSpAMTypeA}, see also its application in the proof of  \cite[Thm.~2.4]{BrSpAMTypeA},
	this implies 
		\[
			\left( (\wt GE)_\varphi/Z,G/Z,\ol\varphi'\right)\succ_{F,b} \left( (\wt GE)_{Q,\theta}/Z,\NNN_G(Q)/Z,\theta^0 \right)
			\]
	for the case of an abelian $E$. 
	
	If $E$ is non-abelian, then it is isomorphic to $\mathfrak S_3\times C$ for some finite cyclic group $C$.
	When the considerations above are applied to $\wh \varphi'$ and $\wh \theta$, Lemma~3.2 of  \cite{CabSpAMTypeA} is replaced by the result  \cite[Lem.~6.13]{Ruh_Jordan_decomp} for Brauer characters. Note that in the proof of \cite[Lem.~6.13]{Ruh_Jordan_decomp} ordinary characters can be replaced by Brauer characters without additional requirements. Again we obtain
	\begin{align*}
	\left( (\wt GE)_\varphi/Z,G/Z,\ol\varphi'\right)& \succ_{F,b} \left( (\wt GE)_{Q,\theta}/Z, \NNN_G(Q)/Z, \theta^0 \right). 
	\qedhere
	\end{align*}
	\end{proof}

\end{lm}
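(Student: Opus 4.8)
The plan is to obtain the blockwise relation $\succ_{F,b}$ by enriching the purely cohomological relation $\succ_{F,c}$ coming from \Cref{CohomPairs} with block‑induction data, first over $\wt G$ and then along $E$.

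First I would verify that hypotheses (i)--(iv) here entail those of \Cref{CohomPairs} for \emph{every} $\wt G$‑conjugate $\varphi'$ of $\varphi$. The extension requirements and the stabiliser identities (i)--(iii) transcribe directly (note that the extension hypothesis in (i) is stated for all $\wt G$‑conjugates, which is exactly what is needed when applying \Cref{CohomPairs} to each $\varphi'$); the one point needing a comment is that \Cref{CohomPairs}(iv) asks for $\IBr(\ZZ(\wt G)\mid\wt\varphi)=\IBr(\ZZ(\wt G)\mid\wt\theta^0)$ whereas (iv) above gives $\bl(\wt\varphi)=\bl(\wt\theta)^{\wt G}$, and block induction forces $\wt\varphi$ and $\wt\theta$ to lie over the same central character. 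The hypothesis $[(\wt G\rtimes E)_\varphi,\wt G]\le\wt G_\varphi$ implies $(\wt GE)_{\varphi'}=(\wt GE)_\varphi$ for every $\wt G$‑conjugate $\varphi'$, so \Cref{CohomPairs} applies unchanged to each such $\varphi'$ and yields
\[
\bigl((\wt GE)_\varphi/Z,\,G/Z,\,\ol{\varphi'}\bigr)\ \succ_{F,c}\ \bigl((\wt GE)_{Q,\theta}/Z,\,\NNN_G(Q)/Z,\,\theta^0\bigr).
\]

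Next comes the block refinement over $\wt G$. I would fix a representative $(\wt Q,\wt\theta)$ of $\olwtQwttheta$ and let $\wh\theta$ be the extension of $\theta$ to $\NNN_{\wt G}(Q)_\theta/Q$ which, via induction and a map $\Delta_\theta$ as in \Cref{DZBijRDZ}, corresponds to $(\wt Q,\wt\theta)$; by the block‑induction part of that theorem, $\wh\theta$ is block‑compatible with $\wt\theta$ under induction through $\NNN_{\wt G}(\wt Q)$. Since $\wt G/G$ is abelian and $\theta$ extends to $\NNN_{\wt G}(Q)/Q$ by (ii), Clifford theory for blocks lets me single out a $\wt G$‑conjugate $\varphi'$ of $\varphi$ together with an extension $\wh\varphi'$ of $\varphi'$ to $\wt G_\varphi$ satisfying $(\wh\varphi')^{\wt G}=\wt\varphi$ and $\bl(\wh\varphi')=\bl(\wh\theta)^{\wt G_\varphi}$. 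Having these two matched, block‑compatible extensions (and using that $\wt G_\varphi/G$ is abelian, so the intermediate‑$J$ block equalities are automatic) upgrades the relation above, at the level of $\wt G$, to
\[
\bigl(\wt G_\varphi/Z,\,G/Z,\,\ol{\varphi'}\bigr)\ \succ_{F,b}\ \bigl(\wt G_{Q,\theta}/Z,\,\NNN_G(Q)/Z,\,\theta^0\bigr)
\]
for this particular $\varphi'$.

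Finally I would transport the $\succ_{F,b}$‑relation from $\wt G$ back to $\wt GE$, feeding in the $\succ_{F,c}$‑relation over $\wt GE$ already obtained. Here the structural assumption on $E$ is essential: if $E$ is abelian, the passage is \cite[Lem.~3.2]{CabSpAMTypeA} (compare its use in the proof of \cite[Thm.~2.4]{BrSpAMTypeA}); if $E$ is isomorphic to $\mathfrak{S}_3\times C$ for a cyclic group $C$, the corresponding statement is \cite[Lem.~6.13]{Ruh_Jordan_decomp}, whose argument carries over from ordinary to Brauer characters without extra hypotheses. Applying the relevant lemma to $\wh\varphi'$, $\wh\theta$ and the $E_\varphi$‑action data of (iv) gives the asserted
\[
\bigl((\wt GE)_\varphi/Z,\,G/Z,\,\ol{\varphi'}\bigr)\ \succ_{F,b}\ \bigl((\wt GE)_{Q,\theta}/Z,\,\NNN_G(Q)/Z,\,\theta^0\bigr).
\]
The step I expect to be the real obstacle is the middle one: selecting a $\wt G$‑conjugate $\varphi'$ that admits an extension $\wh\varphi'$ to $\wt G_\varphi$ whose block induces precisely to $\bl(\wh\theta)$ over $\wt G_\varphi$ --- that is, synchronising the block‑induction datum produced by $\Delta_\theta$ on the weight side with the choice of extension on the Brauer‑character side, while keeping intact all the factor‑set and central‑scalar bookkeeping inherited from \Cref{CohomPairs}. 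The $E$‑transport in the last step is then essentially a matter of quoting the cited lemmas.
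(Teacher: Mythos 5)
Your proposal follows essentially the same route as the paper's own proof: first invoking \Cref{CohomPairs} (with the commutator hypothesis giving a common stabiliser for all $\wt G$-conjugates and block induction forcing a common central character) to get $\succ_{F,c}$, then synchronising a block-compatible extension $\wh\varphi'$ of a suitable $\wt G$-conjugate $\varphi'$ with the extension $\wh\theta$ produced via $\Delta_\theta$ to obtain $\succ_{F,b}$ over $\wt G_\varphi$, and finally transporting along $E$ via \cite[Lem.~3.2]{CabSpAMTypeA} in the abelian case and \cite[Lem.~6.13]{Ruh_Jordan_decomp} for $E\cong\mathfrak{S}_3\times C$. The step you single out as the delicate one (choosing $\varphi'$ and $\wh\varphi'$ with $\bl(\wh\varphi')=\bl(\wh\theta)^{\wt G_\varphi}$) is handled in the paper by exactly the Clifford-theoretic argument you sketch, so no genuinely different ideas are involved.
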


\begin{proof}[Proof of \Cref{NewIndBAWCond}]
	Let $\wt \calB$ be the set of blocks of $\wG$ covering some block of $\calB$. Then the set $\Alp(\wt \calB)$ consist of all the weights of $\wG$ covering some weight of $\Alp(\calB)$ according to \Cref{CovBlCovWei}, and this set is $E':=E_\calB$-stable. We can choose $\calA$ to be a $\LinBr(\wG/G)\rtimes E'$-transversal in $\Alp(\wt \calB)$.   As in the proof of \Cref{AWStCond} we choose $\calA_0$ to be a set of weights such that each weight of $G$ in $\calA_0$ is covered by exactly one of $\calA$ and it satisfies \ref{NewIndBAWCond}(iv). 
	We see that $\calA_0$ is then a $\wt G\rtimes E'$-transversal in $\Alp(\calB)$.

	Like in the proof of \Cref{AWStCond} let $\calG:=\wt\Omega^{-1}(\calA)$. We see that it is a $\LinBr(\wt G/G)\rtimes E'$-transversal. 
	
	Let $\wt \vphi\in\calG$. Then there exist $\overline{(Q,\theta)}\in\calA_0$ and  $\overline{(\wt Q,\wt \theta)}\in\calA$ such that $\wt\Omega(\wt\varphi)=\overline{(\wt Q,\wt \theta)}$ and  $\overline{(\wt Q,\wt \theta)}$ covers $\overline{(Q,\theta)}$. 
	As $\Out(G)_{\wG-\text{-orbit of }B_0}$ is abelian, $[(\wG\rtimes E)_\vphi,\wG]\leq \wG_\vphi$ holds for every $\vphi\in\IBr(B_0)$. Hence we can apply \Cref{CohomPairs2} and obtain that there is some $\vphi\in\IBr(G\mid \wt \vphi)$ such that 
		\[
			\left( (\wt GE)_\varphi/Z,G/Z,\ol\varphi\right)\succ_{F,b} \left( (\wt GE)_{Q,\theta}/Z,\NNN_G(Q)/Z,\theta^0 \right). 
			\]
	For each $\wt\vphi\in\calG$ we choose one character $\vphi$ that way. Let $\calG_0$ be the set of characters $\vphi$ thus obtained. 
	
	We define $\Omega$ first on $\calG_0$: 
	for $\vphi\in\calG_0$ one fixes $\Omega(\vphi)$ to be the unique element in $\calA_0$ such that $\wt \Omega(\IBr(\wt G\mid \vphi))=\Alp(\wt G\mid \Omega(\vphi))$. 
	As in the proof of \Cref{AWStCond} the stabilizers of $\vphi$ and $\Omega(\vphi)$ in $\wt G\rtimes E'$ coincide and $\Omega$ is well-defined as a $\wG\rtimes E'$-equivariant bijection between $\IBr(\calB)$ and $\Alp(\calB)$ giving ordered character triples. Since $\succ_{F,b}$ is preserved via conjugation, $\Omega$ has all the required properties. 
\end{proof}

In view of the possible applications the requirement \ref{NewIndBAWCond}(iii) can be lightened, by only requiring that $\vphi$ extends to its stabilizer in $G\rtimes E$. When for $G$, $\wG$ and $E$ certain groups are chosen then it is sufficient to require that one character extends to its stabilizer in $G\rtimes E$. This aspect was implicitly used in the proof of \cite[Thm.~2.4]{BrSpAMTypeA}. 

\begin{lm} 
Let $\bG$ be some simply connected simple algebraic group $\bG$ with an $\FF_q$-structure (where $q$ is a power of a prime that is not assumed to be $p$) given by a Frobenius endomorphism $F\colon \bG\to \bG$ and $\wt \bG$ an algebraic group coming from a regular embedding of $\bG$ and acted on by $F$. Let $E\leq \Aut(\GF)$ be the subgroup induced by field and graph automorphisms of $\widetilde{\bG}$ such that $\widetilde{\bG}^F\rtimes E$ induces the whole automorphism group of $\GF$, (see \cite[2.5.12]{GLS3} for the structure of ${\rm Aut}(\GF)$).

If $\calB$ is a $\wGF$-orbit in $\Bl_p(\GF)$ with abelian $\Out(\GF)_\calB$ and $\varphi\in\Irr(\calB)$ satisfies Condition \ref{IndBAWCond}(iii), then $\varphi^g$  satisfies Condition \ref{IndBAWCond}(iii) for any $g\in \widetilde{\bG}^F$.
\end{lm}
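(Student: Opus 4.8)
The plan is to reduce the statement to a single structural fact — that the image of $g$ in $\Out(\GF)$ commutes with the image of $E_\varphi$ — and then to transport the two requirements of Condition~\ref{NewIndBAWCond}(iii) (in its lightened form: that $(\wGF\rtimes E)_\varphi=\wGF_\varphi\rtimes E_\varphi$ and that $\varphi$ itself extends to $\GF\rtimes E_\varphi$) from $\varphi$ to $\varphi^g$ by conjugating with $g$. Set $G=\GF$ and $\wt G=\wGF$; since $\wGF$ is $E$-stable, $\wGF\rtimes E$ is defined with $\Cent_{\wGF\rtimes E}(\GF)=\Zent(\wGF)$ and with preimage of $\Inn(\GF)$ equal to $\GF\Zent(\wGF)$. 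For $g\in\wGF$ its image $\bar g\in\Out(\GF)$ lies in the abelian subgroup $D$ of diagonal automorphisms, and as $\calB$ is a single $\wGF$-orbit we have $D\le\Out(\GF)_\calB$; likewise $E_\varphi$ fixes $\bl(\varphi)\in\calB$ and normalises $\wGF$, hence stabilises $\calB$, so the image of $E_\varphi$ (and of $E_{\wGF\text{-orbit of }\varphi}$) in $\Out(\GF)$ lies in $\Out(\GF)_\calB$. Also $\varphi^g$ depends only on the coset $g\wGF_\varphi$.

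First I would record the key commutation. Because $\Out(\GF)_\calB$ is abelian and contains $\bar g$ and the image of $E_\varphi$, for every $\sigma\in E_\varphi$ the commutator $[g,\sigma]$ computed in $\wGF\rtimes E$ lies in $\GF\Zent(\wGF)$; equivalently $g^{-1}\sigma g=w_\sigma\sigma$ with $w_\sigma\in\GF\Zent(\wGF)$, and $\sigma\mapsto w_\sigma$ is a $1$-cocycle on $E_\varphi$. Moreover every element of $\GF\Zent(\wGF)$ fixes $\varphi$, since $\GF$ acts on $\IBr(\GF)$ by inner automorphisms and $\Zent(\wGF)$ centralises $\GF$.

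Next I would transfer the stabiliser condition. The equality $(\wGF\rtimes E)_\varphi=\wGF_\varphi\rtimes E_\varphi$ forces $E_{\wGF\text{-orbit of }\varphi}=E_\varphi$; as $\varphi$ and $\varphi^g$ lie in one $\wGF$-orbit, $E_{\wGF\text{-orbit of }\varphi^g}=E_\varphi$ too, so $E_{\varphi^g}\subseteq E_\varphi$. Conversely, for $\sigma\in E_\varphi$ the identity $(\varphi^g)^\sigma=\varphi^g$ is equivalent to $\varphi^{g\sigma g^{-1}}=\varphi$, and writing $g\sigma g^{-1}=h\sigma$ with $h\in\GF\Zent(\wGF)$ (key commutation) gives $\varphi^{g\sigma g^{-1}}=(\varphi^h)^\sigma=\varphi$; hence $E_{\varphi^g}=E_\varphi$. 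Since $(\wGF\rtimes E)_{\varphi^g}\cap\wGF=\wGF_{\varphi^g}$ and $(\wGF\rtimes E)_{\varphi^g}$ has image $E_{\varphi^g}$ in $(\wGF\rtimes E)/\wGF$, this yields $(\wGF\rtimes E)_{\varphi^g}=\wGF_{\varphi^g}\rtimes E_{\varphi^g}$.

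The last and hardest step is to transfer extendibility. Let $\wh\varphi\in\IBr(\GF\rtimes E_\varphi)$ extend $\varphi$. Conjugation by $g$ is an automorphism of $\wGF\rtimes E$ fixing $\GF$, and by the key commutation it carries $\GF\rtimes E_\varphi$ onto $\GF\langle w_\sigma\sigma:\sigma\in E_\varphi\rangle\le\GF\Zent(\wGF)\rtimes E_\varphi$, so $\wh\varphi^g$ extends $\varphi^g$ over this conjugate subgroup. It remains to transfer this extension back to the complement $\GF\rtimes E_\varphi=\GF\rtimes E_{\varphi^g}$: the two subgroups are sections of $\GF\Zent(\wGF)\rtimes E_\varphi\to E_\varphi$ differing by a cocycle with values in $\GF\Zent(\wGF)$, which acts trivially on $\GF$, and the required adjustment is exactly the one carried out in the proof of \cite[Thm.~2.4]{BrSpAMTypeA} (using \cite[Lem.~3.2]{CabSpAMTypeA}, respectively \cite[Lem.~6.13]{Ruh_Jordan_decomp} when $E$ involves $\Symm_3$). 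This produces an extension of $\varphi^g$ to $\GF\rtimes E_{\varphi^g}$, so that, together with the previous step, $\varphi^g$ again satisfies Condition~\ref{NewIndBAWCond}(iii). I expect this final adjustment to be the real obstacle: conjugation by $g$ preserves $\GF\rtimes E_\varphi$ only modulo $\Zent(\wGF)$, and one must check that this central discrepancy does not obstruct correcting the extension — which uses the precise structure of $\wGF$ and of the diagonal automorphism action.
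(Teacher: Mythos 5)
Your first two steps are fine and coincide with the paper's opening lines: the commutator relation $g^{-1}\sigma g\in \GF\Zent(\wGF)\sigma$ for $\sigma\in E_\varphi$ follows from the abelianness of $\Out(\GF)_\calB$, and from it the equalities $E_{\varphi^g}=E_\varphi$ and $(\wGF\rtimes E)_{\varphi^g}=\wGF_{\varphi^g}\rtimes E_{\varphi^g}$. The gap is your final step, which is where the entire content of the lemma lies and which you yourself flag as unchecked. Conjugating an extension $\wh\varphi$ of $\varphi$ by $g$ yields an extension of $\varphi^g$ only on the subgroup $(\GF E_\varphi)^g$, a second ``complement-like'' subgroup of $\GF\Zent(\wGF)E_\varphi$ over $E_\varphi$; extendibility of $\varphi^g$ over $(\GF E_\varphi)^g$ does not formally imply extendibility over $\GF E_\varphi$. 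The two obstruction classes are restrictions of the factor set of a projective representation of $\GF\Zent(\wGF)E_\varphi$ associated with $\varphi^g$ to two different subgroups, and they can differ; the difference is governed by the values of the relevant central character on the $\Zent(\wGF)$-parts of the discrepancy cocycle $\sigma\mapsto g^{-1}\sigma g\,\sigma^{-1}$, and nothing in your argument controls it (your parenthetical that $\GF\Zent(\wGF)$ ``acts trivially on $\GF$'' is also not correct as stated). The references you invoke do not supply this step: \cite[Lem.~3.2]{CabSpAMTypeA} and \cite[Lem.~6.13]{Ruh_Jordan_decomp} are used (see Lemma~\ref{CohomPairs2}) to pass block-ordered character-triple relations from $\wGF_\varphi$ up to $(\wGF E)_\varphi$, not to correct a complement discrepancy by $\Zent(\wGF)$, and the ``aspect implicitly used'' in \cite[Thm.~2.4]{BrSpAMTypeA} is precisely what this lemma is meant to justify, so appealing to it is circular.

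The paper's proof is devoted entirely to this point and runs differently: one first reduces (WLOG $E_\varphi$ non-cyclic and $\wGF_\varphi\neq\wGF$, hence $\Zent(\GF)\neq 1$, then a type-by-type analysis of $\Out(\GF)$ using the abelianness of $\Out(\GF)_\calB$) to the single case $\GF=\SL_n(q)$ with $n\equiv 2\pmod 4$ and $4\mid q-1$. There the discrepancy is not corrected but eliminated: since $\mathcal{L}_F(\Zent(\bG))$ contains no involution, one can choose $x\in\mathcal{L}_F^{-1}(\Zent(\bG))$ inducing the same (unique nontrivial) diagonal automorphism as $g$ and satisfying $\wh{F_0}(x)x^{-1},\,\wh\gamma(x)x^{-1}\in\GF$, i.e.\ $x$ commutes with the relevant field and graph automorphisms modulo $\GF$ itself, not merely modulo $\GF\Zent(\wGF)$. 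Conjugation by $x$ then genuinely preserves $\GF\langle\gamma\rangle$ and $F_0^i$-invariance, so an $F_0^i$-invariant extension of $\varphi$ to $\GF\langle\gamma\rangle$ transports to one of $\varphi^x=\varphi^g$, which extends further to $\GF E_\varphi$ over the cyclic quotient. Without producing such a representative of the diagonal automorphism (or some substitute control of the central discrepancy), your transfer step remains unproved.
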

\begin{proof} 
Let $G:=\GF$ and $\wG:=\wG^F$.
Since ${\rm Out}(G)_{\mathcal{B}}$ is abelian, $\varphi':=\varphi^g$ satisfies $$(\widetilde{G}\rtimes E)_{\varphi}= \widetilde{G}_{\varphi'}\rtimes E_{\varphi'}.$$
We have to show that $\varphi'$ extends to $GE_{\varphi'}=GE_\varphi$.

Without loss of generality it can be assumed that $E_\varphi$ is non-cyclic and $\wt G_\varphi\neq \wt G$, in particular $Z(G)\neq 1$.
This implies that $G$ is of type A$_n(q)$, D$_n(q)$ or E$_6(q)$ (all untwisted).
Let $\overline{\gamma}, \overline{F_0}\in \Out(G)$ be associated with a graph automorphism $\gamma$ of $G$ of order $2$ and with a  field automorphism $F_0$, such that $E=\langle {F_0},{\gamma}\rangle$. 
Since $E$ is non-cyclic, ${F_0}$ is of even order (and so $4\mid (q-1)$, if $q$ is odd).

In type D$_{2m}$, field automorphisms centralise ${\rm Diag}(G):= \wG/(G.Z (\wG))$ while the graph automorphisms acts faithfully on it.
Since ${\rm Out}(G)_\mathcal{B}$ is abelian, $E_{\calB}$ only contains field automorphisms and thus is cyclic.
In types A$_{n-1}$, D$_{2m+1}$, E$_6$ the automorphism $\gamma$ acts by inversion on ${\rm Diag}(G)$.
As $E_\varphi$ is non-cyclic, it follows that $E_{\varphi}=\langle F_0^i,{\gamma}\rangle\leq E_\calB$ for some positive integer $i$ and so $Z(G)\cong {\rm Diag}(G)\cong C_2$.
This excludes type E$_6$ where $|Z(G)|= 3$, also D$_{2m+1}$ because then $|Z(G)|={\rm gcd}(4,q-1)$ and type A$_{n-1}$ with $n \not\equiv 2 \mod 4$ because then $|Z(G)|={\rm gcd}(n,q-1)$.
Hence the remaining case is when $G$ is of type A$_{n-1}$ with $n\equiv 2\mod 4$ and $4\mid q-1$.

Let $\mathcal L_F$ be the Lang map  corresponding to $F$ with $G=\bfG ^F$, which commutes with $F_0$ and $\gamma$.
Additionally $G\lhd \wh G:=\mathcal{L}_F^{-1}(Z(\bfG))$. 
Since $F_0$ and $\gamma$ come from endomorphisms of $\bG$, there are automorphisms $\wh{F_0}$ and $\wh{\gamma}$ of $\wh G$ extending the automorphisms $F_0$ and $\gamma$ of $G$.
From the equality $\wt \bG= \bG Z(\wt \bG)$ and Lang's theorem, it follows that there is an isomorphism $$\widetilde G/G Z(\widetilde G) \cong \wh G/ G Z({\bf G})$$ which preserves the conjugation action on $G$.
Moreover, applying the Lang map yields isomorphisms 
$$ \wh G/G \cong  Z({\bfG}) \text{ and } \wh G/ G Z({\bf G})\cong Z(\bfG)/\mathcal{L}_F(Z(\bfG)).$$

As $G=\SL_n(q)$ with $4|q-1$ and $n\equiv 2 \mod 4$, we observe that $\mathcal{L}_F(Z(\bfG))$ does not contain an element of order $2$, since $\mathcal{L}_F$ acts on $Z(\bfG)$ by $x\mapsto x^{q-1}$. 
Let $x\in \wh G\setminus GZ(\bfG)$.
Then $\mathcal L_F(x)$ is the central involution of $G$ and so $\wh{F_0}(x)x^{-1}, \wh\gamma(x)x^{-1}\in G$.
Moreover $x$ induces the non-trivial diagonal automorphism of $G$ and so $\varphi^g=\varphi^x$ is the only character of $G$ that is $\wG$-conjugate to $\varphi$  but different from $\varphi$.
 
It follows that $\varphi^x$ extends to $G\rtimes E_{\varphi}$ if and only if $\varphi^x$ extends to $G\langle  \overline{\gamma}\rangle$ and some extension is $E_{\varphi}$-invariant.

Take $\widetilde{\varphi}$ an ${F_0}^i$-invariant extension of $\varphi$ to $G\langle \gamma\rangle$. 
Then $\widetilde{\varphi}^x$ is an extension of $\varphi^x$ to $G\rtimes \langle {\gamma}\rangle\cong G\rtimes \langle \wh{\gamma}\rangle$ since $\wh{\gamma}^x\in G \wh{\gamma}$. By definition $\widetilde{\varphi}^x$ is $(\wh{F_0}^i)^x$-invariant. Since $(\wh{F_0}^i)^x\in G\wh{F_0}^i$ it is also $\wh{F_0}^i$-invariant and thus ${F_0}^i$-invariant.
This implies that $\varphi^x$ extends to $GE_{\varphi}=GE_{\varphi^x}$.
\end{proof}

\bigskip
\noindent
{ J. B. and B. S.: \address{School of Mathematics and Natural Sciences University of Wuppertal, Gau\ss str. 20, 42119 Wuppertal, Germany}}
\end{document}